\newtheorem{theorem}{Theorem}[section]
\newtheorem*{result}{The main result}
\newtheorem{lemma}[theorem]{Lemma}
\newtheorem{prop}[theorem]{Proposition}
\newtheorem{cor}[theorem]{Corollary}
\newtheorem{mydef}[theorem]{Definition}
\newtheorem{remark}[theorem]{Remark}
\newtheorem{hyp}[theorem]{Hypothesis}
\theoremstyle{remark}
\numberwithin{equation}{section}
\def\R{{\mathbb R}}
\def\C{{\mathbb C}}
\begin{document}

\begin{frontmatter}
\title{Finding Eigenvalues the Rupert Way}
\author[unc]{Colin J. Grudzien\corref{cor1}} 
\ead{cgrudz@email.unc.edu}
\address[unc]{Department of Mathematics, University of North Carolina at Chapel Hill, Phillips Hall, CB3250 UNC-CH, Chapel Hill, NC 27599-3250, USA}
\author[Surrey]{Thomas J. Bridges}
\ead{t.bridges@surrey.ac.uk}
\address[Surrey]{Department of Mathematics University of Surrey, Guildford, GU2 7XH, England, UK}
\author[unc]{Christopher K.R.T. Jones}
\ead{ckrtj@email.unc.edu}

\cortext[cor1]{Principal corresponding author}
\begin{abstract}
We develop a stability index for the travelling waves of non-linear reaction diffusion equations using the geometric phase induced on the Hopf bundle $S^{2n-1} \subset \mathbb{C}^n$.  This can be viewed as an alternative formulation of the winding number calculation of the Evans function, whose zeroes correspond to the eigenvalues of the linearization of reaction diffusion operators about the wave.  The stability of a travelling wave can be determined by the existence of eigenvalues of positive real part for the linear operator.  Our \textbf{method of geometric phase} for locating and counting eigenvalues is inspired by the numerical results in Way's \textit{Dynamics in the Hopf bundle, the geometric phase and implications for dynamical systems}\cite{WAY2009}.  We provide a detailed proof of the relationship between the phase and eigenvalues for dynamical systems defined on $\mathbb{C}^2$ and sketch the proof of the method of geometric phase for $\mathbb{C}^n$ and its generalization to boundary-value problems.  Implementing the numerical method, modified from \cite{WAY2009}, we conclude with open questions inspired from the results.
\end{abstract}
\begin{keyword} 
stability analysis; travelling waves; steady states; geometric dynamics; Evans function
\end{keyword}
\end{frontmatter}

\section{Introduction}
\label{section:intro}
Way, in his PhD thesis \cite{WAY2009}, developed numerical results supporting the hypothesis that parallel translation in the Hopf bundle could locate and measure the multiplicity of eigenvalues for linearizations of reaction-diffusion equations, on the real line, about travelling waves.  A generic Hopf bundle is represented as $S^{2n-1}\subset \mathbb{C}^n$, over the base space $\mathbb{C}P^{n-1}$, with fiber $S^1$.  Therefore, any non-zero vector in the space $\mathbb{C}^n$ can be mapped to the Hopf bundle $S^{2n-1}$ via spherical projection.  
This realization of the Hopf bundle as a subset of $\mathbb{C}^n$ allows one to consider an arbitrary complex dynamical system, such as that arising in the Evans function theory,
and map non-zero solutions onto the Hopf bundle.
By constructing our problem appropriately, we may develop a winding number through the parallel translation in the fiber of the Hopf bundle, $S^1$, induced by the dynamics in the phase space.

In particular, the eigenvalue problem for a reaction diffusion operator, linearized about a steady state travelling wave, gives rise to a dynamical system on $\mathbb{C}^n$.  For such linearizations, Way studied the winding in the fiber $S^1$ and its relationship to the eigenvalues of the operator.  Projecting these $\lambda$ dependent, particular solutions onto $S^{2n-1}$ the dynamics on $\mathbb{C}^n$ induce parallel translation in the Hopf bundle.  
The winding in the fiber is called the geometric phase, because of its relationship with Berry's phase in quantum mechanics (e.g.\ Berry~\cite{berry-paper}, Way~\cite{WAY2009}, Chruscinski \& Jamiolkowski \cite{2012geometric}).
In this work we show that particular solutions will pick up information from the dynamics on $\mathbb{C}^n$, and that the winding of these loops of particular solutions can be used to describe the spectrum of the linear operator.  The method of geometric phase is to be considered as a new development of the Evans function that reformulates the eigenvalue calculation.

Consider a system of non-linear reaction diffusion equations, 
	\begin{equation} \begin{matrix} \label{eq:reactiondiffusion}
	U_t=U_{xx}+f(U)\,, & &U(x,0)=U_0(x) \in \mathbb{R}^m\,,
	\end{matrix} \end{equation}
where $f:\R^m\to\R^m$ is a smooth (at least $C^2$) non-linear mapping, and $x\in\R$.
We assume that there exists a travelling wave solution, i.e., a solution of the
single variable $ \xi = x -ct$, so $U(\xi)$ satisfies:
\begin{equation*}\begin{matrix}
-cU'=U''+f(U) &  &\left(' = \frac{d}{d \xi}\right)
\end{matrix}\,.\end{equation*}
Travelling waves and other steady states provide important qualitative understanding of the reaction diffusion equation by describing long time dynamics of the solutions to the PDE.  Stable solutions in particular represent the most physically realistic solutions, being robust with respect to perturbations in the evolution.  The stability of travelling wave solutions for a system as above is determined by the existence of eigenvalues of positive real part for the linearized operator about the wave, as shown in Bates and Jones \cite{Bates89}.
	
The system in equation (\ref{eq:reactiondiffusion}) is re-written in a moving frame as
\begin{align}
\label{eq:wave}
U_t =& U_{\xi \xi} +cU_\xi +f(U)
\end{align}
	for which the travelling wave is a time independent solution.  Linearizing equation (\ref{eq:wave}) about the wave $U(\xi)$, we obtain the $\xi$ dependent operator $\mathcal{L}$ such that:
\begin{equation}\label{eq:linearization}
\mathcal{L} (p)	 = p_{\xi \xi} + c p_\xi +F\big(U(\xi)\big)p \\
\end{equation}
with $p\in\mathbb{B}(\mathbb{R},\mathbb{R}^m)$, the bounded, uniformly continuous functions from $\mathbb{R}$ to $\mathbb{R}^m$, and $F$ the Jacobian of $f$.  

Let $\Omega\subset\mathbb{C}$ be an open, simply connected domain that contains only discrete spectrum of $\mathcal{L}$.  For $\lambda \in \Omega$, we consider the equation
\begin{equation*}
( \mathcal{L} - \lambda I)(p) = 0
\end{equation*}
that has the equivalent formulation as the system
\begin{align*} 
p' =&  q\\
q' =& -cq+\big( \lambda - F(U)\big)p
\end{align*}		
Let $I$ be the $m\times m$ identity matrix---we can write the above as the linear system,  
\begin{equation}\label{eq:matrixsystem}
\begin{matrix}
Y'=A(\lambda,\xi)Y & & Y = \begin{pmatrix} p \\ q \end{pmatrix}\in\mathbb{C}^{2m}
 \\ \\ 
 A(\lambda,\xi) =
\begin{pmatrix}
0 && I\\ \lambda - F(U) && -cI \\
\end{pmatrix}& &
\end{matrix}
\end{equation}
where $A$ is an $n\times n$ complex block matrix where $n\equiv 2 m$.

For dynamical systems of the form (\ref{eq:matrixsystem}) on $\mathbb{C}^n$, the Hopf bundle $S^{2n-1}$ can be realized in the phase space of the system by spherical projection,
\[
\hat{Y} = \frac{Y}{\|Y\|}\in\C^n\cap S^{2n-1}\,.
\]
Reformulating (\ref{eq:matrixsystem}) as a dynamical system on $S^{2n-1}$ generates
paths on the sphere which can then be projected onto ${\mathbb CP}^{n-1}$, with
a phase in $S^1$.

As a property of linear systems any non-zero solution will remain non-zero over finite integration scales, and in this way, the dynamics act naturally on the Hopf bundle.  Loops of solutions in the phase space parametrized in the value $\lambda$ will define parallel translation which, in the fiber $S^1$, yields the winding number.  Provided that the space of solutions satisfying the asymptotic conditions is of dimension one, we use the Hopf bundle in $\mathbb{C}^n$ directly to measure the winding.  Calculating the winding of these particular solutions parametrized in $\lambda$, and measuring this winding relative to asymptotic conditions, we seek to recover the total multiplicity of the eigenvalues enclosed by the $\lambda$ path.

We denote the general approach of calculating the dynamically accumulated winding in the Hopf bundle relative to some asymptotic value as the \textbf{method of geometric phase}---in this work we consider the winding induced on a particular choice of solutions for a reaction diffusion equation.  Our strategy for proving that the geometric phase can be used to count eigenvalues is to relate
it to the Chern number of the bundle formulation of the Evans function in the framework of
Alexander, Gardner \& Jones~\cite{AGJ1990}.  Our work differs from Way's numerical method of geometric phase by realizing the necessity of computing the ``relative phase'' with respect to the asymptotic conditions for the dynamical system---the total accumulated phase of a loop of these particular solutions, relative to the asymptotic conditions, will yield the eigenvalue count. The full development of the method for unbounded systems defined on $\C^2$ is in \S \ref{section:unbounded} and \S \ref{section:twodim}.  For higher dimensional systems we make additional modifications using the exterior algebra and the determinant bundle construction as in Alexander, Gardner \& Jones \cite{AGJ1990}.  Passing to the exterior algebra much of the proof of the method of geometric phase for $\C^2$ holds, and we sketch the proof for general systems on unbounded domains in \S \ref{section:extension}. We also formulate an adaptation of the method of geometric phase to calculate the winding of the Evans function for boundary value problems, and this is treated in \S \ref{section:boundary}.   Our main results are stated in the Theorems \ref{theorem:casei}, \ref{theorem:caseii}, \ref{theorem:caseunbounded} and \ref{theorem:boundary}.  Finally, in \S \ref{section:numerics} we present a numerical example and discuss the implications for future research in \S \ref{section:discussion}.

\section{The Evans function for systems on unbounded domains}
\label{section:unbounded}

The Evans function is a complex-analytic function whose zeros, including multiplicity, correspond to the eigenvalues of $\mathcal{L}$.  The
operator $\mathcal{L}$ is reformulated as a linear, non-autonomous
dynamical system in $\xi$.  This dynamical system has different formulations
depending on whether the interval in $\xi$ is finite or infinite.
We will begin by considering systems on unbounded domains as in Alexander, Gardner \& Jones \cite{AGJ1990}, which serves as the inspiration for the proof of the method of geometric phase.  The theory for the Evans function 
on finite domains was introduced by Gardner \& Jones \cite{GJ91}, and further
developed by Austin \& Bridges \cite{BRI03}.  Hence there will be a
natural extension of the method of geometric phase to boundary value problems and this theory is developed in \S\ref{section:boundary}, starting with the general
boundary value formulation as in \cite{BRI03}.

In both the finite-interval case and infinite-interval case, the problem
reduces to the study of a complex analytic function, the Evans function,
denoted by $D(\lambda)$, whose
zeros correspond to eigenvalues of $\mathcal{L}$.
The strategy for determining stability of the wave with the Evans function is essentially to enclose the eigenvalues for $\mathcal{L}$ of positive real part with some contour in the complex plane, and to use the argument principle with the Evans function to count the zeros enclosed by the contour.  

\subsection{The Evans function}
\label{section:efunct}

  The matrix system (\ref{eq:matrixsystem}) for the eigenvalue problem is non-autonomous with dependence on $U(\xi)$, but the travelling wave solution $U(\xi)$ must be bounded as $\xi \rightarrow \pm \infty$.  Hence, we consider systems such that the travelling wave (\ref{eq:wave}) satisfies the following hypothesis.
\begin{hyp}\label{hyp:wave}
Define the limits of the wave, $\lim_{\xi\rightarrow \pm \infty} U(\xi) = U(\pm \infty)$. We assume that there are positive $a,C\in \mathbb{R}$ for which
\begin{align}
\parallel U(\xi) - U(+\infty) \parallel \leq C e^{-a\xi}  &\hspace{4mm}\text{for}\hspace{4mm}\xi \geq 0 \\
\parallel U(\xi) - U(-\infty)\parallel \leq C e^{a \xi} &\hspace{4mm}\text{for}\hspace{4mm} \xi \leq 0\\
\parallel U'(\xi) \parallel \leq Ce^{-a \mid \xi\mid} &\hspace{4mm}\text{for all }\xi.
\end{align}
\end{hyp}
Under this hypothesis, we may define asymptotic, autonomous systems by the limiting values of the wave:  
	\begin{equation}
	\label{eq:asymptotic}
	\begin{matrix}
	Y'=A_{\pm \infty}(\lambda) Y \\ \\ A_{\pm \infty}(\lambda)  :=  \lim_{\xi\rightarrow \pm \infty} A(\lambda,\xi) = 
	\begin{pmatrix}
	0 & I\\
	\lambda - F\big(U(\pm \infty)\big) & -cI\\
	\end{pmatrix}
	\end{matrix}
	\end{equation}

\begin{mydef}
Let $\mathcal{L}$ be a linear operator derived as in equation (\ref{eq:linearization}) from a non-linear reaction diffusion equation. Suppose the equation $(\mathcal{L}-\lambda)p=0$ defines a flow on $\mathbb{C}^n$ for $\lambda \in \Omega \subset\mathbb{C}$:
\begin{equation}\label{eq:gennonaut} \begin{matrix}
Y' &=& A(\lambda, \xi) Y \\ \\ A_{\pm \infty}(\lambda) &:=& \lim_{\xi \rightarrow \pm \infty} A(\lambda, \xi)\\
\end{matrix} \end{equation}
System (\ref{eq:gennonaut}) is said to \textbf{split} in $\Omega$ if $A_{\pm \infty}$ are hyperbolic and each have exactly $k$ eigenvalues of positive real part (\textbf{unstable eigenvalues}) and $n-k$ eigenvalues of negative real part (\textbf{stable eigenvalues}), including multiplicity, for every $\lambda \in \Omega$.  
\end{mydef}

\begin{hyp}\label{hyp:split}
Assume $\Omega$ is open, simply connected and contains only discrete eigenvalues of $\mathcal{L}$.  We always assume that the systems under consideration split in the domain $\Omega$.
\end{hyp}

\begin{hyp}\label{hyp:K} Let $K\subset \mathbb{C}$ be a contour in $\mathbb{C}$, describing a path for the spectral parameter $\lambda$.  Assume that the contour $K$ is a piecewise smooth, simple closed curve in $\Omega \subset \mathbb{C}$ such that there is no spectrum of $\mathcal{L}$ in $K$.  Let $K^\circ$ be the region enclosed by $K$---we assume $K^\circ$ is homeomorphic to the disk $D\subset\mathbb{R}^2$ and that $K$ is parametrized by $\lambda(s):[0,1]\hookrightarrow K$ with standard orientation.
\end{hyp}

The above hypotheses will allow us to construct the Evans function on unbounded domains; the Evans function was first derived in a series of papers \cite{evans1972},\cite{evans72},\cite{evan72},\cite{evans75} by Evans on nerve impulse equations, and was generalized by Alexander, Gardner \& Jones \cite{AGJ1990} for general systems of reaction diffusion equations.  The Evans function has been applied in many more situations and its development as well as the current state-of-the-art is well documented and explained in Kapitula \& Promislow \cite{kapitula2013}.
 
Our work in this paper is to reformulate the winding number calculation for the Evans function into a new geometric setting, as was suggested by Way \cite{WAY2009}.  The construction of the Evans function by Alexander, Gardner \& Jones \cite{AGJ1990} utilizes a vector bundle construction to take advantage of the unique classification of complex vector bundles over 2-spheres with their Chern numbers.  We frame our discussion in this setting for the Evans function to establish the link between Chern numbers and the method of geometric phase.

\subsection{The unstable bundle}
Recall that the eigenfunctions for $\mathcal{L}$, as in equation (\ref{eq:linearization}), are required to be bounded for all $\xi\in\R$. 
For the associated system of equations (\ref{eq:asymptotic}), the eigenvalues of $A_{\pm \infty}$ determine the asymptotic growth and decay rates of potential
eigenfunctions.  By a compactification of the $\xi$ parameter we may define a dynamical system for $\{\xi \in [-\infty,+\infty]\}$ ``capped'' on the ends by these asymptotic, autonomous systems.  The asymptotic systems have fixed points at $0$, by linearity of the dynamics, and thus we define un/stable manifolds of the extended system.  The un/stable eigenvectors of the system at $\pm \infty$ determine the asymptotic behavior of solutions that lie in the un/stable manifolds of the critical points of the asymptotic systems. 
\begin{lemma}
Under the above hypotheses, \ref{hyp:wave} and \ref{hyp:split}, a solution to the extended system is an eigenfunction for $\mathcal{L}$ if and only if it is in the unstable manifold for $A_{-\infty}$ and the stable manifold of $A_{+\infty}$.
\end{lemma}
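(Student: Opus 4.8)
The plan is to identify the eigenfunctions of $\mathcal{L}$ with the nonzero bounded solutions of the matrix system (\ref{eq:matrixsystem}), and then to identify the bounded solutions with the trajectories that lie in the stable and unstable manifolds of the capped asymptotic system. First I would observe that, because $\Omega$ contains only discrete spectrum and the system splits (Hypothesis \ref{hyp:split}), $\lambda\in\Omega$ is an eigenvalue of $\mathcal{L}$ precisely when there is a nonzero $p\in\mathbb{B}(\mathbb{R},\mathbb{R}^m)$ with $(\mathcal{L}-\lambda)p=0$; via the reformulation leading to (\ref{eq:matrixsystem}) this is the same as a nonzero bounded solution $Y=(p,q)^{\mathsf T}$ of $Y'=A(\lambda,\xi)Y$, where one uses that boundedness of $p$ forces boundedness of $q=p'$ through the equation, so that $Y$ itself is bounded on all of $\mathbb{R}$.

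Next I would invoke the theory of exponential dichotomies. Since $A(\lambda,\xi)$ depends on $\xi$ only through $U(\xi)$ and $F(U(\xi))$, Hypothesis \ref{hyp:wave} gives $A(\lambda,\xi)\to A_{\pm\infty}(\lambda)$ at an exponential rate, and by Hypothesis \ref{hyp:split} the limits $A_{\pm\infty}(\lambda)$ are hyperbolic with $k$ unstable and $n-k$ stable eigenvalues. The standard roughness/persistence argument then shows that $Y'=A(\lambda,\xi)Y$ admits exponential dichotomies on the half-lines $[0,\infty)$ and $(-\infty,0]$, with dichotomy subspaces that converge as $\xi\to\pm\infty$ to the corresponding eigenspaces of $A_{\pm\infty}(\lambda)$; this is exactly the mechanism behind the construction of the unstable bundle in Alexander, Gardner and Jones \cite{AGJ1990}, and it is detailed in Kapitula and Promislow \cite{kapitula2013}.

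I would then combine the two ingredients. On $[0,\infty)$ the dichotomy means a solution either decays exponentially (if $Y(0)$ lies in the stable subspace of the dichotomy, call it $E^s_+(\lambda,0)$) or grows exponentially (if $Y(0)$ has any component in the unstable subspace); hence a solution is bounded on $[0,\infty)$ if and only if $Y(0)\in E^s_+(\lambda,0)$, equivalently its trajectory lies in the stable manifold of $A_{+\infty}$ in the capped system. Symmetrically, a solution is bounded on $(-\infty,0]$ if and only if $Y(0)\in E^u_-(\lambda,0)$, the unstable manifold of $A_{-\infty}$. Thus a solution is bounded on all of $\mathbb{R}$ — i.e.\ an eigenfunction — if and only if $Y(0)\in E^u_-(\lambda,0)\cap E^s_+(\lambda,0)$, which is precisely the assertion that the trajectory lies in the unstable manifold for $A_{-\infty}$ and the stable manifold of $A_{+\infty}$. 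The converse direction is then immediate: a trajectory lying in both manifolds decays exponentially to $0$ at both ends, so its $p$-component is bounded (indeed exponentially localized) and solves $(\mathcal{L}-\lambda)p=0$, making $\lambda$ an eigenvalue with eigenfunction $p$.

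The main obstacle is the second step: one must be sure that ``bounded'' and ``decaying into the stable/unstable manifold'' genuinely coincide. This relies on hyperbolicity of $A_{\pm\infty}(\lambda)$ to exclude neutral (bounded but nondecaying) modes, and on the roughness theorem to guarantee that the exponentially small, $\xi$-dependent perturbation $A(\lambda,\xi)-A_{\pm\infty}(\lambda)$ neither destroys the dichotomies on the half-lines nor moves the dichotomy subspaces away from the asymptotic eigenspaces. Everything else is bookkeeping with the flow and with the definition of the compactified (capped) system.
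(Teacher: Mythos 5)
Your argument is correct: reducing the eigenvalue problem to bounded nonzero solutions of the first-order system, and then using exponential dichotomies on each half-line (with roughness to handle the exponentially decaying perturbation of the hyperbolic limits) to identify boundedness with membership in the un/stable manifolds of the capped system, is exactly the mechanism behind this lemma. The paper itself offers no independent proof and simply cites Alexander, Gardner and Jones, where this dichotomy argument is carried out, so your proposal follows essentially the same route.
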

\begin{proof}
This is proved by Alexander, Gardner \& Jones \cite{AGJ1990}.\end{proof}
We define the $\xi$ dependent variable $\tau$ to compact the dynamics, where
\begin{align*}
\xi  &= : \frac{1}{2\kappa} \log\left(\frac{1+\tau}{1-\tau}\right)
\end{align*}
for some $\kappa\in \mathbb{R}$.  Appending the $\tau$ yields the new system
\begin{equation}\label{eq:taudependent} \begin{matrix}
Y' = A(\lambda,\tau) Y   & &
A(\lambda, \tau) = \begin{cases}
A(\lambda, \xi(\tau)) &  \text{for $\tau\neq \pm 1$} \\
A_{\pm \infty}(\lambda) &  \text{for $\tau = \pm 1$}
\end{cases} \\ \\
 \tau' =\kappa(1-\tau^2)&  & '=\frac{d}{d\xi}   
\end{matrix} \end{equation}

\begin{lemma}
We may choose $\kappa>0$ such that the flow defined by equation (\ref{eq:taudependent}) is $C^1$ on the entire compact interval.  
\end{lemma}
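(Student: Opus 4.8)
The plan is to reduce the statement to a single regularity assertion about the matrix-valued function $\tau \mapsto A(\lambda,\tau)$ on the closed interval $[-1,1]$. Indeed, in system (\ref{eq:taudependent}) the $\tau$-equation $\tau' = \kappa(1-\tau^2)$ is polynomial in $\tau$, and the $Y$-equation $Y' = A(\lambda,\tau)Y$ is linear, hence smooth, in the phase variable $Y$; so the (autonomous) vector field generating the flow is $C^1$ on $\mathbb{C}^n \times [-1,1]$ if and only if $A(\lambda,\cdot)$ is $C^1$ on $[-1,1]$, with one-sided derivatives at $\tau = \pm 1$. For $\tau \in (-1,1)$ this holds trivially: $A(\lambda,\tau) = A(\lambda,\xi(\tau))$ is the composition of the real-analytic change of variables $\xi(\tau) = \tfrac{1}{2\kappa}\log\tfrac{1+\tau}{1-\tau}$ with $\xi \mapsto A(\lambda,\xi)$, which is $C^1$ in $\xi$ provided $f$ (hence $F = Df$) is $C^2$ and $U \in C^1$, which we assume throughout. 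The only real content, therefore, is to match values and derivatives at the two endpoints, where $A$ has been \emph{capped} by the constant matrices $A_{\pm\infty}(\lambda)$.

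The key step is an asymptotic comparison near $\tau = 1$; the case $\tau = -1$ is identical after reflecting $\xi \mapsto -\xi$. Inverting the change of variables gives $1 - \tau = \dfrac{2 e^{-2\kappa\xi}}{1 + e^{-2\kappa\xi}}$, so in particular $e^{-2\kappa\xi} \le 1-\tau$ for $\tau$ near $1$, whence $e^{-a\xi} \le (1-\tau)^{a/2\kappa}$. Next, Hypothesis \ref{hyp:wave} together with the smoothness and boundedness of $f$ on the (bounded) range of $U$ gives, for $\xi \ge 0$,
\begin{equation*}
\big\| A(\lambda,\xi) - A_{+\infty}(\lambda) \big\| \;=\; \big\| F(U(\xi)) - F(U(+\infty)) \big\| \;\le\; C_1 e^{-a\xi}, \qquad \Big\| \tfrac{\partial}{\partial\xi} A(\lambda,\xi) \Big\| \;=\; \Big\| \tfrac{d}{d\xi} F(U(\xi)) \Big\| \;\le\; C_2 e^{-a\xi},
\end{equation*}
with $C_1, C_2$ independent of $\lambda \in \Omega$, since the rate $a$ is a property of the wave alone. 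Applying the chain rule $\partial_\tau A = (\partial_\xi A)\,\partial_\tau \xi$ with $\partial_\tau \xi = \big(\kappa(1-\tau^2)\big)^{-1}$ and substituting $e^{-a\xi} \le (1-\tau)^{a/2\kappa}$ yields
\begin{equation*}
\big\| A(\lambda,\tau) - A_{+\infty}(\lambda) \big\| = O\!\big((1-\tau)^{a/2\kappa}\big), \qquad \Big\| \tfrac{\partial}{\partial\tau} A(\lambda,\tau) \Big\| = O\!\big((1-\tau)^{\,a/2\kappa - 1}\big) \qquad (\tau \to 1^-).
\end{equation*}

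It then suffices to choose any $\kappa$ with $0 < \kappa < a/2$, noting $\kappa > 0$ is exactly what makes $\tau = \tanh(\kappa\xi)$ a diffeomorphism of $\mathbb{R}$ onto $(-1,1)$. The first estimate shows $A(\lambda,\cdot)$ extends continuously to $\tau = 1$ with value $A_{+\infty}(\lambda)$; the second, since now $a/2\kappa - 1 > 0$, shows $\partial_\tau A(\lambda,\tau) \to 0$ as $\tau \to 1^-$. By the mean value theorem the one-sided derivative of $A(\lambda,\cdot)$ at $\tau = 1$ exists and equals $0$, and the same limit relation makes $\partial_\tau A(\lambda,\cdot)$ continuous at $\tau = 1$; the symmetric argument handles $\tau = -1$. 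Hence $A(\lambda,\cdot) \in C^1([-1,1])$ for every $\lambda \in \Omega$ with the single choice of $\kappa$, and by the reduction in the first paragraph the flow of (\ref{eq:taudependent}) is $C^1$ on the whole compact interval. I expect the only delicate point to be this endpoint-derivative matching — the place where the \emph{exponential} approach of the wave in Hypothesis \ref{hyp:wave} is genuinely needed and where the upper bound $\kappa < a/2$ on the compactification rate comes from; the remainder is routine bookkeeping with the chain rule.
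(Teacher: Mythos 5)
Your proof is correct. The paper itself gives no argument here: it simply observes that the flow is smooth on finite $\xi$-scales by linearity and then cites Lemma 3.1 of Alexander, Gardner and Jones \cite{AGJ1990} for the condition $\kappa<\tfrac{a}{2}$. What you have written is, in effect, a self-contained reconstruction of that cited lemma: the reduction to $C^1$ regularity of $\tau\mapsto A(\lambda,\tau)$ at the endpoints, the inequality $e^{-a\xi}\le(1-\tau)^{a/2\kappa}$ coming from $\tau=\tanh(\kappa\xi)$, the chain-rule factor $\partial_\tau\xi=\bigl(\kappa(1-\tau^2)\bigr)^{-1}$, and the resulting exponent $a/2\kappa-1>0$ are exactly where the threshold $\kappa<a/2$ originates, so your account makes explicit what the paper leaves to the reference. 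Two small remarks: your derivative estimate uses $\mid U'(\xi)\mid\le Ce^{-a\mid\xi\mid}$, which is clearly the intended reading of the third inequality in Hypothesis \ref{hyp:wave} (as printed, ``$\le Ce^{a\mid\xi\mid}\le 0$'' is a typo); and your bounds tacitly use that $F$ and $DF$ are Lipschitz/bounded on the bounded range of the wave, which requires $f\in C^2$ --- an assumption consistent with the paper's setting but worth stating, as you do. The endpoint-derivative matching via the mean value theorem is the genuine content, and you have it right.
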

\begin{proof}
On finite time scales the flow $(\ref{eq:taudependent})$ is smooth by linearity, but Lemma 3.1 in Alexander, Gardner \& Jones \cite{AGJ1990} shows that if $\kappa<\frac{a}{2}$, where $a$ is defined in Hypothesis \ref{hyp:wave}, then equation (\ref{eq:taudependent})
$C^1$ on the entire compact interval.\end{proof}

\begin{hyp}
We will assume that for all systems $0 < \kappa < \frac{a}{2}$. 
\end{hyp}

Within the invariant planes $\{ \tau = \pm 1\}$ of system (\ref{eq:taudependent}), the dynamics are governed by the linear, autonomous equations
\begin{equation*} \begin{matrix}
Y'=A_{\pm\infty}Y & & ' = \frac{d}{d\xi} \\ \\
\tau' \equiv 0 & & (\tau = \pm 1) \\
\end{matrix} \end{equation*}
so that solutions in these planes are determined entirely by the stable and unstable directions of the asymptotic systems.  For $\{\tau \in (-1,+1) \}$, solutions are governed by the non-autonomous system and have limits in the invariant planes as $\xi \rightarrow \pm\infty$. 

Consider the un/stable manifolds of the critical points
\begin{equation*}\begin{matrix}
(0,\pm1) \in \mathbb{C}^n \times \{\tau = \pm 1\}
\end{matrix} \end{equation*} 
The dynamics in the invariant planes are linear with $k$ unstable directions and $n-k$ stable directions; with the appended $\tau$ equation, the system gains one real unstable/ stable direction at $\tau = \mp 1$ respectively.  Standard invariant manifold theory dictates that there is a $2k+1$ \textbf{(real)} dimensional local unstable manifold in some neighborhood of $(0,-1)$ that can be extended globally by taking its flow forward for all time.  In the invariant plane $\tau=-1$, the unstable manifold is just the span of the unstable eigenvectors, but for $\tau>-1$, this becomes a $\tau$ dependent subspace of $\mathbb{C}^n$.

From the contour $K$ and the $\tau$ parameter we construct a ``parameter sphere'' above which we can view solutions to the system in equation (\ref{eq:taudependent}) as paths in an appended trivial $\mathbb{C}^n$ bundle.  
\begin{mydef}
The set $K\times \{\tau \in [-1,+1]\}$
 defines a topological cylinder as $K$ is topologically equivalent to $S^1$.  Gluing $K^\circ$, the region enclosed by $K$, to the cylinder with we obtain a topological 2-sphere, 
 \begin{equation}\label{eq:parametersphere}\begin{matrix}
 M\equiv K\times \{\tau \in [-1,+1] \} \cup K^\circ \times \{\tau = \pm 1\}
 \end{matrix}\end{equation} 
 hereafter denoted the \textbf{parameter sphere}.  The trivial $\mathbb{C}^n$ bundle over the parameter sphere is defined as $M \times \mathbb{C}^n$.
\end{mydef}  
  Solutions to the system in equation (\ref{eq:taudependent}) can be tracked in the fibers of the trivial bundle, with their evolution defined by the flow and the parameter values in $M$.  Alexander, Gardner \& Jones \cite{AGJ1990} show that for fixed $\lambda \in K$, the unstable manifold of the critical point $(0,-1)\in\mathbb{C}^n\times[-1,1]$ converges to the unstable space of $A_{+\infty}(\lambda)$ for $\tau =1$ in Grassmann norm.  We will foliate the unstable manifold over $\{ \lambda \in K^\circ \} \times \{\tau = +1\}$ with fibers defined by the unstable subspace of $A_{+\infty}(\lambda)$.  
\begin{mydef}
The unstable manifold of the critical point $(0,-1) \in\mathbb{C}^n\times[-1,1]$ defines a subspace of $\mathbb{C}^n$ that approaches $(0,-1)$, exponentially decaying as $\xi \rightarrow - \infty$ for $\mid\xi\mid$ sufficiently large.  For each fixed $(\lambda,\tau)$ let $W^u (\lambda, \tau)$ denote the \textbf{unstable manifold} in $\mathbb{C}^n$ defined by the flow at $(\lambda,\tau)$. The 
total space $E$ defines a non-trivial bundle over $M$ with projection
$\pi_E:E\to M$,
\begin{equation}\begin{CD}
W^u @>>> E\\
& & @VVV \pi_E \\
& & M
\end{CD}\end{equation}  
E is contained in the trivial bundle $M\times \mathbb{C}^n$,
and is called the \textbf{unstable bundle}.
\end{mydef}
\begin{lemma}
  The unstable bundle is a $k$ dimensional vector bundle over the sphere $M$.
\end{lemma}
\begin{proof}
For a proof the reader is referred to Alexander, Gardner \& Jones \cite{AGJ1990}.
\end{proof}

The construction of the unstable bundle is useful because the sum of its Chern numbers is related to the total multiplicity of the eigenvalues enclosed by the contour $K$.  Chern numbers represent toplogical invariants for a complex vector bundle, and there are several ways to treat their derivation---we provide only a cursory description of the Chern numbers of the unstable bundle. 
\begin{lemma}
Given a connection $\omega$ for the unstable bundle we can construct the curvature form for the bundle by the relation
\begin{align}
d\omega = -\omega \wedge \omega + \Omega
\end{align} 
where $\Omega$ is the curvature form.
Let $H^j(M,\mathbb{Z})$ be the $j^{th}$ cohomology group of the parameter sphere with coefficients in $\mathbb{Z}$.  The Chern class of degree $j$ for the parameter sphere is an element
\begin{align}
C_j(E) \in H^{2j}(M,\mathbb{Z}),
\end{align}
and is the $j^{th}$ coefficient of the characteristic polynomial of the curvature form $\Omega$, ie:
\begin{align}
\det\left(I + \frac{t}{2 \pi i} \Omega\right) = 1 + \sum^k_{j=1} t^{j} C_j(E)
\end{align}
\end{lemma}
\begin{proof}
This is simply the application of classical results to this specific bundle construction---for a discussion of the general results and a derivation of character classes for general vector bundles consult Morita \cite{morita2001}.
\end{proof}
\begin{remark}
The Chern classes are independent of the choice of the connection $\omega$.
The \textbf{Chern number}, denoted $c_1(E)$,
is the integral over the base manifold of an element in the Chern class.
\end{remark}
\begin{cor}
For the $k$ dimensional unstable bundle, $c_1(E)$ is the only non-trivial Chern number.
\end{cor}
\begin{proof}
Recall that the parameter sphere $M\cong S^2$ and the cohomology groups are given
\begin{align}
H^j(S^2,\mathbb{Z}) \cong \begin{cases} \mathbb{Z}\text{ if $j=0,2$} \\ 0 \text{ otherwise} \end{cases}
\end{align}
\end{proof}
\begin{remark}
As $c_1(E)$ is the only non-trivial Chern number of the $k$ dimensional unstable bundle, we may describe $c_1(E)$ unambiguously as \textbf{the Chern number} of the unstable bundle.
\end{remark}
\begin{lemma}
The Chern number of the unstable bundle equals the total multiplicity of the eigenvalues enclosed by the contour $K$.
\end{lemma}
\begin{proof}
This is one of the essential results of Alexander, Gardner \& Jones \cite{AGJ1990} and the reader is referred there for details, and a full development of the unstable bundle.
\end{proof}

\subsection{The Hopf bundle and the method of geometric phase}

The Hopf bundle is a classical example of a principal fiber bundle, which has a realization in $\mathbb{C}^n$---we take advantage of this realization to re-frame the winding of the unstable bundle in terms of the geometric phase induced in the fibers.
\begin{mydef}
The Hopf bundle is a principal fiber bundle with full space $S^{2n-1}$, base space $\mathbb{C}P^{n-1}$, and fiber $S^1$. The fiber $S^1$ acts naturally on $S^{2n-1}$ by the action of the unitary group $U(1)$; with respect to this action the quotient is $\mathbb{C}P^{n-1}$.  The spaces are related by the diagram
\begin{equation}\begin{CD}
S^1 @>>> S^{2n-1}\\
& & @VVV \pi \\
& & \mathbb{C}P^{n-1}
\end{CD}\end{equation}  
where $\pi$ is the quotient map induced by the group action.
\end{mydef}

 For a generic Hopf bundle, of dimension $2n-1$, there exists an intuitive choice of connection between fibers.  We will use the realization of $S^{2n-1}\subset \mathbb{C}^n$ by spherical projection to define the connection pointwise.

\begin{mydef}
For the Hopf bundle $S^{2n-1}$, viewed in coordinates for $\mathbb{C}^n$, we define the connection 1-form $\omega$ pointwise for $p\in S^{2n-1}$ as a mapping of the tangent space of the Hopf bundle $T_p\left(S^{2n-1}\right)\subset T_p\left(\mathbb{C}^n\right)$
\begin{equation} \begin{matrix}
\omega_p : & T_p\left(S^{2n-1}\right) & \rightarrow & i\mathbb{R}  \\ \\
 & V_p & \mapsto & \langle V_p, p \rangle_{\mathbb{C}^n} \\
\end{matrix} \end{equation}
where $i\mathbb{R}$ is the Lie algebra of the fiber $S^1$ \cite{WAY2009}.  A connection 1-form defines a connection and we denote $\omega$ to be the \textbf{natural connection} on the Hopf bundle.
\end{mydef}
\begin{lemma}
The natural connection is a connection of the generic Hopf bundle $S^{2n-1}$ and it is the unique connection for the $S^3$ Hopf bundle.
\end{lemma}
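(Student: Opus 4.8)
The plan is to prove the two assertions separately. First, for general $n$, I would verify directly that the pointwise formula $\omega_p(V_p) = \langle V_p, p\rangle_{\mathbb{C}^n}$ defines a principal connection $1$-form on the $S^1$-bundle $S^{2n-1}\to\mathbb{C}P^{n-1}$; this is a short sequence of one-line checks. Second, for $S^3$ (the case $n=2$), I would establish uniqueness — but here one must first fix the meaning of ``unique connection,'' since the affine space of \emph{all} principal connections on any nontrivial circle bundle over a positive-dimensional base is modeled on the nonzero space of $i\mathbb{R}$-valued $1$-forms on the base (the adjoint bundle is trivial because $S^1$ is abelian). So ``unique'' has to mean unique among connections compatible with the symmetries of the Hopf bundle, and the natural symmetry group to impose is the structure-preserving isometry group.

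For the first part, recall that since the structure group $S^1$ is abelian, an $i\mathbb{R}$-valued $1$-form $\omega$ on $S^{2n-1}$ is a connection $1$-form exactly when (a) it is smooth and $i\mathbb{R}$-valued; (b) it reproduces fundamental vector fields, $\omega_p(\zeta^{\sharp}_p)=\zeta$ for all $\zeta\in i\mathbb{R}$, where $\zeta^{\sharp}$ generates the fiber action; and (c) it is invariant under the fiber action, $R_{e^{i\theta}}^{*}\omega=\omega$ (the $\mathrm{Ad}$-equivariance condition, which is trivial here). Each follows immediately: differentiating $\|p\|^2\equiv 1$ along a curve in $S^{2n-1}$ gives $\mathrm{Re}\langle V_p,p\rangle=0$, so $\omega_p(V_p)\in i\mathbb{R}$, and smoothness is clear since $\omega$ is the restriction of a fixed real-bilinear pairing; for (b), $\zeta=it$ generates $s\mapsto e^{ist}p$ with velocity $itp$ at $p$, so $\omega_p(itp)=\langle itp,p\rangle=it\|p\|^2=it=\zeta$; for (c), the fiber action is linear, hence $(R_{e^{i\theta}})_{*}V_p=e^{i\theta}V_p$ and $\omega_{e^{i\theta}p}(e^{i\theta}V_p)=\langle e^{i\theta}V_p,e^{i\theta}p\rangle=|e^{i\theta}|^2\langle V_p,p\rangle=\omega_p(V_p)$. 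I would also record the equivalent geometric picture: $\ker\omega_p=p^{\perp}$, the complex-orthogonal complement, a smooth $S^1$-invariant distribution complementary to the vertical line $\mathbb{R}\cdot ip$, the complementarity being automatic since $\omega_p$ is onto $i\mathbb{R}$ and injective on the vertical.

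For uniqueness, I would realize the Hopf fibration of $S^3$ as the homogeneous fibration $SU(2)\to SU(2)/U(1)\cong\mathbb{C}P^1$, on which $SU(2)$ — equivalently the $U(2)$-action on $S^3\subset\mathbb{C}^2$, which commutes with scalar multiplication and descends to the isometric $SO(3)$-action on $\mathbb{C}P^1$ — acts by bundle automorphisms. The natural connection is $U(2)$-invariant because $\langle AV_p,Ap\rangle=\langle V_p,p\rangle$ for $A$ unitary. Conversely, the difference of two invariant connections descends to an $SO(3)$-invariant, $i\mathbb{R}$-valued $1$-form on $\mathbb{C}P^1\cong S^2$; evaluated at a point this is a covector fixed by the isotropy group $SO(2)$, which acts on $T^{*}_{\mathrm{pt}}S^2$ by rotation and fixes only $0$, so the difference vanishes. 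Equivalently, via the Nomizu description of invariant connections on a reductive homogeneous space, an $SU(2)$-invariant connection corresponds to an $\mathrm{Ad}(U(1))$-equivariant map $\mathfrak{m}\to\mathfrak{u}(1)$; since $\mathfrak{m}\cong\mathbb{C}$ carries a nontrivial weight and $\mathfrak{u}(1)$ is trivial, the only such map is $0$, which pins down the canonical (natural) connection. This is where $S^3$ is genuinely special among the $S^{2n-1}$: the reductive complement $\mathfrak{m}$ to $\mathfrak{u}(1)$ in $\mathfrak{su}(2)$ is unique, so the connection coming from the group structure is unambiguous.

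The main obstacle is not any single computation — the axiom checks are each one line — but getting the uniqueness statement right: one must recognize that it cannot mean ``unique among all principal connections,'' pin down the intended sense (invariance under the structure-preserving symmetries / the homogeneous-space action), and then supply the representation-theoretic reason no other invariant connection exists. A secondary point needing care is bookkeeping of conventions — which slot of $\langle\cdot,\cdot\rangle$ is conjugate-linear, the identification $\mathcal{G}\cong i\mathbb{R}$, and the normalization of the fundamental vector field — since these must line up for step (b) to yield exactly $\zeta$ rather than a nonzero multiple.
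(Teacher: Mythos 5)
Your proposal is correct, but it cannot be compared against an argument in the paper because the paper does not give one: its entire proof is a citation to Section 3.5 of Way's thesis. What you have written is therefore a self-contained substitute, and a sound one. The verification of the three connection axioms is exactly the standard check and each step is right (the $i\mathbb{R}$-valuedness from differentiating $\|p\|^2\equiv 1$, the reproduction of fundamental vector fields via $\omega_p(itp)=it\|p\|^2$, and invariance from unitarity of the fiber action). Your most valuable contribution is the one the paper glosses over entirely: as stated, ``the unique connection for $S^3$'' is false, since the connections on any principal $S^1$-bundle over a positive-dimensional base form an affine space modeled on the (nonzero) space of $i\mathbb{R}$-valued $1$-forms on the base. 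Your repair --- uniqueness among connections invariant under the bundle automorphisms coming from $SU(2)$ (equivalently $U(2)$), proved either by noting that the difference of two invariant connections descends to an $SO(3)$-invariant $1$-form on $S^2$ killed by the $SO(2)$ isotropy, or via Wang's classification of invariant connections as $\mathrm{Ad}(U(1))$-equivariant maps $\mathfrak{m}\to\mathfrak{u}(1)$ --- is the correct reading and almost certainly the content of the cited result, since it is exactly the sense in which the homogeneous fibration $SU(2)\to SU(2)/U(1)$ carries a canonical connection. The one thing you should make explicit if this were to replace the citation is that this reinterpretation is a genuine amendment of the lemma's wording, not merely a clarification; a reader taking ``unique connection'' at face value would be misled by both the paper and, absent your opening caveat, by the lemma itself.
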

\begin{proof}
This is proven by Way \cite{WAY2009} in \S 3.5 and the reader is referred there for a full discussion.\end{proof}
A choice of connection decomposes the tangent space of the Hopf bundle into horizontal and vertical subspaces, which introduces the concept of parallel translation in the fibers.  
\begin{mydef}
The \textbf{vertical} subspace is always canonically defined by the kernel of the push forward of the projection map $\pi$ onto the base space. The \textbf{horizontal} subspace is transverse to the vertical subspace and 
isomorphic to the tangent space of the base space, but in general is not unique, and is defined by the choice of the connection.  In particular, the horizontal subspace can be defined as the kernel of the connection 1-form.  

The vertical and horizontal subspaces of the tangent space, by choice of a connection, give a smooth decomposition of the full tangent space
\begin{align*}
T\left(S^{2n-1}\right) = V\left(S^{2n-1}\right) \oplus H\left(S^{2n-1}\right)
\end{align*}
over the base space $S^{2n-1}$, which is compatible with the trivializations of the bundle.  
\end{mydef}

\begin{remark}
The connection described for the Hopf bundle is a connection on a principal fiber bundle---similarly connections on vector bundles can be described with respect to a group action and the appropriate spaces, but this goes beyond the scope of our discussion.  For a full discussion of vertical and horizontal subspaces, and the theory of connections, the reader is referred to Kobayashi \& Nomizu \cite{kobayashi1996}. 
\end{remark}
Given a differentiable path in the Hopf bundle, and a choice of connection, we may always choose a corresponding ``horizontal lift'', which will describe the displacement in the fiber.  We define the horizontal lift in a similar vein as Kobayashi \& Nomizu \cite{kobayashi1996}. 
\begin{mydef}
Let $v(s) : [0,1] \rightarrow S^{2n-1}$
be a differentiable path in the Hopf bundle.  The \textbf{horizontal lift} of $v(s)$ is a path $w(s): [0,1] \rightarrow S^{2n-1}$ for which 
\begin{align*}
w(0) = v(0)& \\
\pi\big(w(s)\big) \equiv \pi\big(v(s)\big)&\hspace{3mm} \forall s \\
\omega\left(\frac{d}{ds}(w(s)\right) \equiv 0&\hspace{3mm} \forall s.
\end{align*}
ie: $\frac{d}{ds} w(s) \in H\left(S^{2n-1}\right)$ for all $s$.
\end{mydef}
  A differentiable path in the Hopf bundle with a tangent vector that is always in the horizontal subspace experiences no motion in the fiber---parallel translation consists of the displacement in the fibers between a path and its horizontal lift. The fiber of the Hopf bundle is the circle, so parallel translation induces a natural winding number through the displacement in the fiber. 
\begin{mydef}\label{def:geophase}
Let $v(s)$ be a differentiable path in the Hopf bundle, $v: [0,1] \mapsto S^{2n-1}$, and let $w(s)$ be its horizontal lift.
The \textbf{phase curve} $\theta(s)$ for $v(s)$ is defined by the equation
\begin{align}
v(s)&= e^{i\theta(s)}w(s)
\end{align}
ie: the path in the fiber describing the displacement along $v(s)$ between $v(s)$ and its horizontal lift $w(s)$. The \textbf{geometric phase} is the change in the phase curve, ie:
\begin{align}
GP\Big(v\big([0,1]\big)\Big) \equiv & \frac{\theta(1) - \theta(0)}{2\pi}
\end{align}
\end{mydef}
\begin{lemma}
Let $v(s)\subset S^{2n-1}$ parametrize a smooth path $\Gamma$ in the Hopf bundle for $s\in[0,1]$, and let $\theta(s)$ be the phase curve with respect to the horizontal lift $w(s)$.  Then the phase curve satisfies the differential equation
\begin{align}
\theta'(s) = -i \omega\big(v'(s)\big) & & \theta(0) = 0
\end{align}
and the geometric phase can be computed as the pull back of the connection 1-form along $\Gamma$, ie:
\begin{align}
\label{eq:geometricphase}\frac{\theta(1)}{2\pi} & = \frac{1}{2\pi i}\int_\Gamma \omega \\
                       & = \frac{1}{2\pi i}\int^1_{0} \big\langle v'(s), v(s)\big\rangle ds
\end{align}  
\end{lemma}
\begin{proof}
The general form of the differential equation describing the phase curve is derived by Kobayashi \& Nomizu \cite{kobayashi1996}, and is formulated with respect to the natural connection on the Hopf bundle by Way \cite{WAY2009}.  
\end{proof}
\begin{remark}
The geometric phase has important connections to the Berry phase in quantum mechanics, discussed by Way \cite{WAY2009}, and Chruscinski \& Jamiolkowski \cite{2012geometric}.
\end{remark}

The method for computing eigenvalues with geometric phase utilizes general \textbf{non-zero}, differentiable paths in $\mathbb{C}^n$---a useful reformulation of the phase integral in equation (\ref{eq:geometricphase}) for non-zero paths is given in the following lemma.
\begin{lemma}\label{lemma:phaseintegrals}
Suppose for $s\in[0,1]$, $u(s)$ is a non-zero, differentiable path in $\mathbb{C}^n$.  Then the connection of its spherical projection, $\hat{u}(s)\in S^{2n-1}$, can be written
\begin{align}
\omega\left( \frac{d}{ds} \hat{u}(s)\right) =& i\frac{Im\Big(\big\langle u'(s), u(s) \big\rangle\Big)}{\big\langle u(s),u(s) \big\rangle}\label{eq:altconnection}
\end{align}
and the geometric phase along $\hat{u}(s)$ can be computed as
\begin{align}
\frac{\theta(1)}{2\pi} \label{eq:computephase}
&=\frac{1}{2\pi } \int^1_0  \frac{Im\Big(\big\langle u'(s), u(s) \big\rangle\Big)}{\big\langle u(s),u(s) \big\rangle} ds
\end{align}
If $u(s)$ is also a \textbf{closed curve}, then 
\begin{align}
0 \label{eq:realphase}
&= \int^1_0  \frac{Re\Big(\big\langle u'(s), u(s) \big\rangle\Big)}{\big\langle u(s),u(s) \big\rangle} ds
\end{align}
and the geometric phase is equivalent to 
\begin{align}\label{eq:completeintegral}
\frac{\theta(1)}{2\pi} &= \frac{1}{2\pi i} \int^1_0 \frac{\big\langle u'(s),u(s)\big\rangle}{\big\langle u(s), u(s)\big\rangle} ds
\end{align}

\end{lemma}
\begin{proof}
We will begin by deriving the alternative form of the connection (\ref{eq:altconnection}).  If $\hat{u}(s)$ is the spherical projection of the path $u(s)$, then the natural connection is identically
\begin{align*}
\omega\left(\frac{d}{ds} \hat{u}(s)\right) &= \left\langle \frac{d}{ds} \frac{u(s)}{\big\langle u(s),u(s)\big\rangle^\frac{1}{2}}, \frac{u(s)}{\big\langle u(s),u(s)\big\rangle^\frac{1}{2}} \right\rangle\\
&= \left\langle  \frac{u'(s)\big\langle u(s),u(s)\big\rangle^\frac{1}{2}}{\big\langle u(s),u(s)\big\rangle}  - \frac{ u(s) Re\Big(\big\langle u'(s),u(s)\big\rangle\Big)}{\big\langle u(s),u(s)\big\rangle^\frac{3}{2}}, \frac{u(s)}{\big\langle u(s),u(s)\big\rangle^\frac{1}{2}} \right\rangle\\
&=   \frac{\big\langle u'(s),u(s)\big\rangle}{\big\langle u(s), u(s)\big\rangle} - \frac{ Re\Big(\big\langle u'(s),u(s)\big\rangle\Big)}{\big\langle u(s),u(s)\big\rangle}\\
&= i\frac{ Im \Big( \big\langle u'(s), u(s) \big\rangle\Big)}{\big\langle u(s),u(s)\big\rangle}  
\end{align*}
which verifies the equations (\ref{eq:altconnection}) and (\ref{eq:computephase}).  Suppose that $u(s)$ is also a closed curve---then notice,
\begin{align*}
0 & = \log \left(\parallel u(s) \parallel^2 \right) \Big|^{s=1}_{s=0} \\
  & = \int^1_0 \frac{\frac{d}{ds} \big\langle u(s), u(s) \big\rangle }{\big\langle u(s), u(s)\big\rangle} ds\\
  & = 2  \int^1_0 \frac{Re\Big(\big\langle u'(s), u(s)\big\rangle\Big) }{\big\langle u(s), u(s)\big\rangle} ds
\end{align*}
which verifies equation (\ref{eq:realphase})---combining this with equation (\ref{eq:computephase}) this verifies equation (\ref{eq:completeintegral}). 
\end{proof}
\begin{remark}
Given the formulation (\ref{eq:computephase}) of the geometric phase in terms of any non-zero path, we may unambiguously refer to the \textbf{geometric phase of a path} $\mathbf{u(s)\in\mathbb{C}^n}$, describing the geometric phase of its normalization.  
\end{remark}

\subsection{The method of geometric phase on $\mathbb{C}^2$}
We develop the method of geometric phase first in the case where the dynamical system is defined on $\mathbb{C}^2$, where the low dimension allows geometric intuition.  This intuition is useful in proving the general technique, and much of the argument is identical for systems of larger dimension once we introduce determinant bundle.  The reader can consider the scalar bistable equation as a typical example of a PDE for which $\mathcal{L}-\lambda=0$ defines a system on $\mathbb{C}^2$ satisfying the Hypotheses \ref{hyp:wave} and \ref{hyp:split}:
\begin{align}\label{eq:bistable}
u_t=u_{xx}+f(u) &  & f(u)=u(u+1)(u-1) 
\end{align}
This PDE has steady localized solutions, and the spectral problem associated with 
the linearization about such a state can be
formulated as in (\ref{eq:matrixsystem}) with $Y\in\C^2$.
We will revisit this example in \S\ref{section:numerics} and present results demonstrating the numerical method.  The method of geometric phase for such a PDE defining an ODE system on $\mathbb{C}^2$ is described as follows.
\vspace{.25cm}

\begin{minipage}[T]{\linewidth}
\begin{center}
\textbf{Table 1: The Method of Geometric Phase on $\mathbb{C}^2$}\\[3mm]
\begin{tabular}{r|l}
\hline
\phantom{$\bigg|$}\textbf{Step 1:} & 
Choose a contour $K$ in $\mathbb{C}$ that does not intersect the spectrum of the operator $\mathcal{L}$.\\ 
\hline
\phantom{$\bigg|$}\textbf{Step 2:} &
Varying $\lambda \in K$ define $X^+(\lambda)$ to be an analytic loop of eigenvectors for the $A_{+\infty}(\lambda)$ system in\\ & equation (\ref{eq:gennonaut})
where $X^+(\lambda)$ corresponds to the eigenvalue of positive real part.\\ 
\hline
\phantom{$\bigg|$}\textbf{Step 3:} & 
Suppose $Z\big(\lambda,\tau(\xi)\big)$ is a solution to the system defined by equation (\ref{eq:gennonaut}), such that \\ &$\Big(Z\big(\lambda,\tau(\xi)\big),\tau\Big)$ is in the unstable manifold $(0,-1)\in \mathbb{C}^2 \times [-1,1]$ for
equation (\ref{eq:taudependent}).\\
\hline
\phantom{$\bigg|$}\textbf{Step 4:} & 
Calculate the \textbf{relative geometric phase} of $Z\big(\lambda,\tau(\xi)\big)$ with respect to $X^+(\lambda)$,\\& ie:
$GP\Big(Z\big(K,\tau(\xi)\big)\Big) - GP\Big(X^+\big(K\big)\Big)$,
where $GP\Big(u\big([0,1]\big)\Big)$ is the geometric phase of a\\& 
non-zero, differentiable path in $\C^2$, defined in equation (\ref{eq:computephase}). 
\\ 
\hline
\end{tabular}
\end{center}
\end{minipage}
\vspace{\belowdisplayskip}

\begin{result}
The central theme of this work is demonstrating that, for an appropriate choice of $X^+(\lambda)$ and $Z(\lambda,\tau)$, \textbf{the asymptotic relative phase}
\begin{align}\label{eq:asymptoticrelphase}
\lim_{\xi\rightarrow\infty}GP\Big(Z\big(K,\tau(\xi)\big)\Big) - GP\Big(X^+(K)\Big)
\end{align}
equals the total multiplicity of the eigenvalues enclosed by $K$.
\end{result}
Way's numerics supported the hypothesis that the geometric phase of $Z\big(\lambda,\tau(\xi_1)\big)$ should equal the total multiplicity of the eigenvalues for $\mathcal{L}$ in $K^\circ$ when $\xi_1$ is taken sufficiently large \cite{WAY2009}.  However, in our study we reformulate his idea with our \textbf{asymptotic relative phase} calculation, in equation (\ref{eq:asymptoticrelphase}), and the machinery of the determinant bundle.  The dependence on the eigenvectors for $A_{+\infty}(\lambda)$ in the computation of the \textbf{relative phase} turns out to be an essential point in formulating the method, as is using the determinant bundle.  The original numerical method studied the geometric phase of a single eigenvector corresponding to the strongest growing/decaying eigenvalue but in general the information of the full un/stable subspace is required.  We take advantage of the existing computation of the total multiplicity of eigenvalues for $\mathcal{L}$ through the Chern number for the determinant bundle and prove method of geometric phase in generality for unbounded domains in this framework.  Our work was thus to prove the full relationship between the geometric phase and eigenvalues and to adapt the relative phase calculation for higher dimensions and more general boundary conditions.  We return to the example (\ref{eq:bistable}) in \S\ref{section:numerics} to demonstrate the technique as described in the steps above, and explore new questions inspired from the results.

\section{Unstable bundle and the two-dimensional case}
\label{section:twodim}
In this section we will restrict to the case where $m=1$ in the
system (\ref{eq:matrixsystem}) and to the case where the asymptotic system
is symmetric,
\begin{equation*}
\lim_{\xi \rightarrow \pm \infty} A(\lambda, \xi) \equiv A_{\pm\infty}(\lambda) \equiv A_\infty(\lambda)\,.
\end{equation*}  
This restriction on the boundary conditions will give useful geometric intuition of the method, but the restriction is not necessary in general.  We will adapt the theory and proofs presented in \S\ref{section:twodim} to the
general construction of the unstable bundle for $n$ dimensions, $k$ unstable directions, and non-symmetric asymptotic limits in \S \ref{section:extension}.
\subsection{Set up of the Evans function system on $\mathbb{C}^2$}
Let $\mathcal{L}$ be the linearization of a reaction diffusion equation about a steady state.  From $(\mathcal{L}-\lambda)p=0$, where $\lambda \in \Omega \subset\mathbb{C}$, we derive the system on $\mathbb{C}^2$:

\begin{equation}\label{eq:symmetric} \begin{matrix}
Y' &=& A(\lambda, \tau) Y & & A_{\infty}(\lambda) := \lim_{\xi \rightarrow \pm \infty} A(\lambda, \tau)\\ \\
\tau ' &=& \kappa ( 1- \tau^2) & & \\ \\
 A(\lambda, \tau) &=& \begin{cases}
A\big(\lambda, \xi(\tau)\big) &  \text{for $\tau\neq \pm 1$} \\ \\
A_{\infty}(\lambda) &  \text{for $\tau = \pm 1$} \\
\end{cases} \end{matrix}
\end{equation}
where $\Omega$ is open and simply connected, and the system at infinity, $A_\infty(\lambda)$, has one stable and one unstable eigenvalue for every $\lambda \in \Omega$.  Let $K$ be a smooth, simple closed curve in $\Omega \subset \mathbb{C}$ that contains no spectrum of $\mathcal{L}$, let the enclosed region be denoted $K^\circ$ and let $K$ be parametrized by $\lambda(s):[0,1]\hookrightarrow K$.

Denote the eigenvalues of $A_\infty(\lambda)$ by $\mu_1(\lambda)$,$\mu_2(\lambda)$ with
\begin{equation*}Re(\mu_1) < 0 < Re(\mu_2)
\end{equation*} 
for each $\lambda\in \Omega$.  The vector
\begin{equation*} 
X := e^{-\mu_2(\lambda) \xi} Y
\end{equation*}
is in $W^u(\lambda, \tau )$ provided $Y \in W^u(\lambda, \tau)$, because $W^u(\lambda,\tau)$ is a subspace. Let $\frac{d}{d\xi} = '$, then
\begin{align*}
X' =&  -\mu_2(\lambda) e^{- \mu_2(\lambda) \xi} Y + e^{-\mu_2 (\lambda) \xi} Y'\\
 =& (A - \mu_2 I) X
 \end{align*}
 This motivates the following system on $\mathbb{C}^2$:
 \begin{equation}\label{eq:bsymmetric} \begin{matrix}
 X' &=& BX  & & B(\lambda,\tau) := \big(A(\lambda,\tau) - \mu_2 (\lambda)I\big) \\ \\
 \tau ' &=& \kappa ( 1- \tau^2) & & B_\infty (\lambda) := \lim_{\xi \rightarrow \pm \infty} B(\lambda,\tau)
 \end{matrix} \end{equation}
The $\xi$ dependent rescaling transforms the $A$ system in equation (\ref{eq:symmetric}) into the $B$ system in equation (\ref{eq:bsymmetric}) where it will be more convenient to work with the trajectories in the unstable manifold.  
 
 \begin{mydef}Solutions to the $A$ system (\ref{eq:symmetric}) will be denoted with a $\sharp$.  That is, if $Z^\sharp \in W^u(\lambda, \tau_0)$, then there is a $\xi_0$ for which $Z \equiv e^{-\mu_2(\lambda) (\xi - \xi_0)}Z^\sharp$ is the unique solution to the $B$ system (\ref{eq:bsymmetric}) that agrees with $Z^\sharp$ at $(\lambda, \tau_0)$.  Similarly if $Z$ is a solution to the $B$ system, then $Z^\sharp \equiv e^{\mu_2(\lambda) (\xi -\xi_0)} Z$ is the unique solution to the $A$ system that agrees with $Z$ at $\xi_0$.
 \end{mydef}
 
Let $X^-(\lambda)$ be an unstable eigenvector for $A_{-\infty}(\lambda)$.  Then in the $B$ system (\ref{eq:bsymmetric}), if $Z\in span_{\mathbb{C}} \{ X^- (\lambda)\}$, then $(Z,\pm 1)$ is a fixed point.  By the construction of $B_\infty$, in $\tau = - 1$, there is exactly one complex stable direction, one complex center direction corresponding to the line of fixed points, and the real unstable $\tau$ direction.  We may thus construct the center-unstable manifold of a non-zero path of eigenvectors $X^-(\lambda)$ that correspond to the zero eigenvalue in the $B(\lambda)$ system (\ref{eq:bsymmetric}).
 \subsection{The induced flow on $S^3$}
In order to measure the geometric phase of a solution which spans the unstable subspace $W^u(\lambda,\tau)$ we must project the solution onto $S^3$.  On finite timescales, ie: $\tau\in(-1,1)$, this isn't an issue.  A non-zero solution to equation (\ref{eq:symmetric}) may be viewed in hyper-spherical coordinates
\begin{equation*}\{r\in(0,+\infty)\} \times S^3 \times \{ \tau\in(-1,1) \}\end{equation*}
because no solution reaches zero in finite time. However, to measure the phase over the entire bundle we must appeal to solutions to the $B$ system; Lemma 3.7 in Alexander, Gardner \& Jones \cite{AGJ1990} tells us that a solution to equation (\ref{eq:symmetric}) that is in $W^u$, is unbounded and converges to the unstable subspace of $(0,+1)$ in the Grassmann norm as $\xi \rightarrow +\infty$.  Because the solutions of the system (\ref{eq:symmetric}) in $W^u$ approach $0$ as $\xi\rightarrow -\infty$ and are unbounded as $\xi \rightarrow \infty$, we appeal to solutions of the $B$ system instead.

\begin{lemma} There exists a choice of unstable eigenvectors for $A_{\pm\infty}(\lambda)$, $X^\pm(\lambda)$, analytic in $\lambda$ for $\lambda \in \Omega$. 
\end{lemma}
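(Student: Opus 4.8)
The plan is to produce the eigenvector first locally, out of the Riesz spectral projection, and then to patch the local choices together into a single globally analytic section using the simple connectivity of $\Omega$. Since the argument is identical for $A_{+\infty}$ and $A_{-\infty}$ (and these coincide in the symmetric case), I treat $A_{-\infty}(\lambda)$ and write $P^-(\lambda)$ for the spectral projection onto its unstable subspace $E^-(\lambda)$. Around any $\lambda_0\in\Omega$ choose a small circle $\gamma\subset\{\operatorname{Re} z>0\}$ enclosing the unstable eigenvalue(s) of $A_{-\infty}(\lambda_0)$; by the splitting hypothesis the number of unstable eigenvalues is locally constant, so for $\lambda$ near $\lambda_0$ the same $\gamma$ still separates the unstable from the stable spectrum, and
\[
P^-(\lambda)\;=\;\frac{1}{2\pi i}\oint_\gamma\bigl(zI-A_{-\infty}(\lambda)\bigr)^{-1}\,dz
\]
is well defined, has constant rank $k$ (with $k=1$ in the $\mathbb{C}^2$ setting of this section), and depends holomorphically on $\lambda$ because the resolvent does (its poles, the eigenvalues of $A_{-\infty}(\lambda)$, stay off $\gamma$) and $A_{-\infty}(\cdot)$ is itself holomorphic, indeed polynomial, in $\lambda$.

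Next I extract a local analytic frame. Picking a basis $v_1,\dots,v_k$ of $\operatorname{Range}P^-(\lambda_0)$, the vectors $P^-(\lambda)v_1,\dots,P^-(\lambda)v_k$ are holomorphic in $\lambda$ and, being linearly independent at $\lambda_0$, remain so on a neighbourhood of $\lambda_0$, hence give a holomorphic frame of $E^-(\lambda)$ there. In the case at hand $k=1$: any $v$ with $P^-(\lambda_0)v\neq0$ yields a local nonvanishing holomorphic section $\lambda\mapsto P^-(\lambda)v$ of the unstable line. Carrying this out over an open cover of $\Omega$ shows that the family $\{E^-(\lambda)\}_{\lambda\in\Omega}$ is a holomorphic line subbundle $\mathcal{E}^-$ of the trivial bundle $\Omega\times\mathbb{C}^2$.

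It remains to trivialize $\mathcal{E}^-$ globally, and this is the only substantive point. The local sections $s_\alpha$ on patches $\mathcal{U}_\alpha$ satisfy $s_\alpha=g_{\alpha\beta}s_\beta$ on overlaps with $g_{\alpha\beta}$ holomorphic and nowhere zero; a global nonvanishing section is obtained by solving the multiplicative Cousin problem $h_\alpha/h_\beta=g_{\alpha\beta}$, which is solvable because $\Omega$ is an open simply connected subset of $\mathbb{C}$ (so $H^1(\Omega,\mathcal{O})=0$ and $H^2(\Omega,\mathbb{Z})=0$, hence $H^1(\Omega,\mathcal{O}^\ast)=0$; equivalently, every holomorphic vector bundle over such an $\Omega$ is holomorphically trivial by the Oka--Grauert principle). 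Setting $X^-(\lambda):=h_\alpha(\lambda)^{-1}s_\alpha(\lambda)$ on each $\mathcal{U}_\alpha$ gives the desired analytic, nowhere-vanishing unstable eigenvector on all of $\Omega$, and the same construction applied to $A_{+\infty}$ produces $X^+(\lambda)$. I expect this globalization step to be the main obstacle; everything preceding it is soft. In the concrete companion form $A_\infty(\lambda)=\left(\begin{smallmatrix}0 & I\\ \lambda-F(U(\pm\infty)) & -cI\end{smallmatrix}\right)$ one can in fact bypass it: an eigenvector for eigenvalue $\mu$ has the shape $(v,\mu v)^{\mathsf T}$, and since the unstable eigenvalue $\mu_2(\lambda)$ is a simple root of the characteristic polynomial throughout the simply connected $\Omega$ it admits a single-valued analytic branch, yielding an explicit analytic formula for $X^\pm(\lambda)$.
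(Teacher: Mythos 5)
Your proof is correct, but it proceeds quite differently from the paper, which does not argue at all: it simply refers the reader to the constructive algorithm of Humpherys, Sandstede and Zumbrun \cite{hump06} for producing analytic bases of the unstable subspaces. Your route --- Riesz projection $P^-(\lambda)=\frac{1}{2\pi i}\oint_\gamma (zI-A_{-\infty}(\lambda))^{-1}\,dz$ to get local holomorphic frames, then globalization by triviality of holomorphic line bundles over $\Omega$ --- is a complete and self-contained proof, and the globalization step is legitimate (in fact for a planar domain one already has $H^1(\Omega,\mathcal{O})=0$ and $H^2(\Omega,\mathbb{Z})=0$ without invoking simple connectivity, so $H^1(\Omega,\mathcal{O}^\ast)=0$). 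What the cited reference buys that your abstract argument does not is a numerically implementable construction (a continuation/orthogonalization scheme in the spirit of Kato's ODE for analytically varying bases), which matters for this paper since the lemma feeds directly into Way's numerical method; conversely, your argument makes transparent \emph{why} such a choice exists and where the hypotheses on $\Omega$ enter. Two small remarks: for $k>1$ the vectors $P^-(\lambda)v_i$ span the unstable subspace but need not be eigenvectors (only generalized eigenvectors), which is harmless here since the lemma lives in the $k=1$ setting of Section 3 and since the later determinant-bundle construction only needs a frame of $W^u$; and your closing observation that in companion form one can write $X^\pm(\lambda)=(1,\mu_2(\lambda))^{\mathsf T}$ explicitly is exactly what the paper uses in its numerical example (equation for the eigenvectors of $A_\infty$ in Section 6), though it relies on $m=1$ so that no further analytic choice of the $F$-eigenvector is needed.
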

\begin{proof}
For a constructive algorithm for such bases the reader is referred to Humpherys, Sandstede \& Zumbrun \cite{hump06}.\end{proof}
Note that under spherical projection we may lose $\mathbb{C}$ differentiability, but we will retain the differentiability in $s$, where $\lambda(s):[0,1] \hookrightarrow K$ and $s$ is the path parameter.  
\begin{mydef}
Let the contour $K\subset \mathbb{C}$ be given.  A \textbf{reference path} for $\lambda\in K$, defined $X^\pm(\lambda)$ at $\tau =\pm1$ respectively, is a loop of eigenvectors for $A_{\pm\infty}(\lambda)$  that corresponds to the eigenvalue of \textbf{largest, positive, real part} for $A_{\pm\infty}$.
\end{mydef}

\begin{mydef}
Let $X^\pm(\lambda)$ be a reference path chosen analytically in $\lambda$ over $K$ that can be extended smoothly over $K^\circ$ without zeros.  $X^\pm(\lambda)$ is denoted \textbf{non-degenerate} as $X^\pm(\lambda)$ defines fibers compatible with the unstable bundle construction.
\end{mydef}

\begin{lemma}\label{lemma:nonsingproj}
Let $X^-(\lambda)$ be a non-degenerate reference path for $A_{-\infty}$.  Let the center-unstable manifold of this line of critical points, in the $B$ system (\ref{eq:bsymmetric}), be parametrized by $(\lambda,\tau)$ as $Z(\lambda,\tau)$.  Then $Z(\lambda,\tau)$ is non-singular and continuous in its limit $\xi \rightarrow +\infty$, and the span equals the unstable manifold $W^u(\lambda,\tau)$ for all $(\lambda,\tau) \in K\times [-1,1]$.
\end{lemma}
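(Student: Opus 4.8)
The plan is to push the entire statement into the $B$ system (\ref{eq:bsymmetric}), where the object in question is literally a centre--unstable manifold of a line of rest points, and then to extract the three assertions --- non-vanishing on $[-1,1)$, existence and non-vanishing of the limit at $\tau=+1$, and the identification of the span with $W^u(\lambda,\tau)$ --- from invariant--manifold theory together with the hypothesis that $K$ carries no spectrum of $\mathcal L$. The first step is bookkeeping at the two ends. Because $A_\infty(\lambda)$ has simple eigenvalues $\mu_1(\lambda),\mu_2(\lambda)$ with $\mathrm{Re}\,\mu_1<0<\mathrm{Re}\,\mu_2$, the matrix $B_\infty(\lambda)=A_\infty(\lambda)-\mu_2(\lambda)I$ has a simple eigenvalue $0$ with eigenline $\mathrm{span}_{\mathbb C}\{X^-(\lambda)\}$ and a simple eigenvalue $\mu_1(\lambda)-\mu_2(\lambda)$ of negative real part; in the appended equation $\tau'=\kappa(1-\tau^2)$ the derivative $-2\kappa\tau$ equals $2\kappa>0$ at $\tau=-1$ and $-2\kappa<0$ at $\tau=+1$, so the flow (\ref{eq:bsymmetric}) gains a real unstable direction on the face $\tau=-1$ and a real stable direction on the face $\tau=+1$. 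Hence, near the plane $\tau=-1$, the curve of rest points $\mathrm{span}_{\mathbb C}\{X^-(\lambda)\}$ is normally hyperbolic with a one--complex--dimensional stable normal direction and the $\tau$ direction unstable, and standard invariant--manifold theory (equivalently, roughness of exponential dichotomies, as used in Alexander, Gardner, Jones \cite{AGJ1990}) produces a $C^1$ centre--unstable manifold of that line which, by taking the forward flow, extends over $\tau\in[-1,1)$. For fixed $\lambda$ the $X$--equation $X'=B(\lambda,\tau(\xi))X$ is linear, so the slice of this manifold over each $(\lambda,\tau)$ is a complex line in $\mathbb C^2$, and I take $Z(\lambda,\tau)$ to be the continuous nonzero section of it normalised by $Z(\lambda,-1)=X^-(\lambda)$.

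Next I would settle the range $\tau\in[-1,1)$ and the fibre identification at once. The rescaling $Z^\sharp=e^{\mu_2(\lambda)(\xi-\xi_0)}Z$ defining the $\sharp$--correspondence sends a $B$--trajectory in this centre--unstable manifold (which has $Z(\xi)\to a\,X^-(\lambda)$ as $\xi\to-\infty$ for some $a$) to a solution of (\ref{eq:symmetric}) with $Z^\sharp(\xi)\to0$ as $\xi\to-\infty$, hence $Z^\sharp(\lambda,\tau)\in W^u(\lambda,\tau)$; conversely every solution in $W^u(\lambda,\tau)$ has exactly this leading behaviour, and since $\dim_{\mathbb C}W^u(\lambda,\tau)=1$ in the symmetric two--dimensional case we obtain $\mathrm{span}_{\mathbb C}\{Z(\lambda,\tau)\}=\mathrm{span}_{\mathbb C}\{Z^\sharp(\lambda,\tau)\}=W^u(\lambda,\tau)$. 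Non-singularity for $\tau\in[-1,1)$ is then automatic: over a finite $\xi$--interval $Z(\lambda,\cdot)$ solves a linear ODE and starts at $Z(\lambda,-1)=X^-(\lambda)\neq0$ because the reference path is non-degenerate, so it cannot vanish. Continuity in $(\lambda,\tau)$ follows from the analytic dependence of $X^-,\mu_1,\mu_2$ on $\lambda$ along $K$ (the earlier lemma supplying analytic unstable eigenvectors) together with the $C^1$ dependence of the flow and of the invariant manifold on the parameter $\lambda$.

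The only genuinely delicate point is the limit $\xi\to+\infty$, i.e.\ $\tau\to+1$. There $B(\lambda,\tau(\xi))\to B_\infty(\lambda)$ at a rate integrable in $\xi$ (this is what Hypothesis \ref{hyp:wave} buys, after the $\tau$ compactification), and $B_\infty(\lambda)$ has the spectral splitting $\{0\}$ versus $\{\,\mathrm{Re}<0\,\}$ with $0$--eigenline $\mathrm{span}_{\mathbb C}\{X^+(\lambda)\}$. By asymptotic integration (Levinson's theorem / roughness of the exponential dichotomy), $Z(\lambda,\tau(\xi))$ converges as $\xi\to+\infty$ to a vector $d(\lambda)X^+(\lambda)$, the limit of its asymptotically constant $0$--mode. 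If $d(\lambda)$ were $0$, then $Z(\lambda,\tau(\xi))$ would decay at the rate of the $(\mu_1(\lambda)-\mu_2(\lambda))$--mode, forcing $Z^\sharp=e^{\mu_2(\lambda)(\xi-\xi_0)}Z$ to decay like $e^{\mu_1(\lambda)\xi}$ and hence $Z^\sharp$ to be a bounded solution of (\ref{eq:symmetric}) that decays at both ends, i.e.\ an eigenfunction of $\mathcal L$ at $\lambda$ by the eigenfunction characterisation recalled earlier --- impossible since $K$ meets no spectrum by Hypothesis \ref{hyp:K}. Hence $d(\lambda)\neq0$, and setting $Z(\lambda,+1):=d(\lambda)X^+(\lambda)\neq0$ extends $Z$ continuously across the face $\tau=+1$; the uniform estimates of Hypothesis \ref{hyp:wave} and analyticity of the data make the convergence uniform in $\lambda\in K$, so $Z$ is continuous on all of $K\times[-1,1]$. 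Finally, Alexander, Gardner, Jones \cite{AGJ1990} already give $W^u(\lambda,\tau)\to\mathrm{span}_{\mathbb C}\{X^+(\lambda)\}$ in Grassmann norm as $\tau\to+1$, so the identification $\mathrm{span}_{\mathbb C}\{Z(\lambda,\tau)\}=W^u(\lambda,\tau)$ passes to $\tau=+1$ as well, which completes the argument.

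I expect the main obstacle to be precisely this last step: controlling the centre--unstable manifold across the degenerate face $\tau=+1$, where the $\tau$--component of the vector field vanishes while $\xi\to+\infty$. The subtlety is to promote the merely projective (Grassmann--norm) convergence that the bundle construction supplies to honest convergence of the chosen representative $Z(\lambda,\tau)$ to a \emph{nonzero} vector, and this is exactly where non-vanishing of the Evans--type transmission coefficient $d(\lambda)$ --- equivalently, the assumption that $K$ avoids the spectrum of $\mathcal L$ --- is indispensable. The remaining ingredients are routine linear ODE theory and standard invariant--manifold machinery.
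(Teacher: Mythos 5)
Your proposal is correct and follows essentially the same route as the paper: existence and regularity of the centre--unstable manifold from the invariant-manifold/AGJ machinery, the $\sharp$-rescaling to identify its span with $W^u(\lambda,\tau)$, and control of the limit at $\tau=+1$. The only difference is that where the paper simply cites Lemma 6.1 of Alexander, Gardner, Jones for the non-vanishing continuous limit, you unpack that citation into an explicit argument (asymptotic integration plus the observation that vanishing of the transmission coefficient $d(\lambda)$ would make $\lambda$ an eigenvalue, contradicting Hypothesis \ref{hyp:K}), which is precisely the content of the cited result.
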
  
\begin{proof}
As in \S 4 of Alexander, Gardner \& Jones \cite{AGJ1990}, the center-unstable manifold of the path $X^-(\lambda)$ in the $B$ system can parametrized by $(\lambda,\tau)$
\begin{equation*} \begin{matrix} 
 Z(\lambda,\tau) & & Z(\lambda,-1) \equiv X^-(\lambda) \\ \\
 \end{matrix} \end{equation*}
 such that it is $\mathbb{C}$ differentiable in $\lambda$ for $\tau \in [-1,1)$ fixed.  
  
  The $\xi$ dependent scaling of $Z$  
 \begin{equation*}Z^\sharp(\lambda,\tau)=e^{\mu_2(\lambda) \xi}Z(\lambda, \tau) \end{equation*}
yields a solution to the $A$ system which is necessarily in $W^u$, by the exponential decay condition as $\xi \rightarrow - \infty$.  Therefore $Z(\lambda,\tau)$ spans $W^u(\lambda,\tau)$ for each $\tau \in [-1,+1)$.  Lemma 6.1 in \cite{AGJ1990} tells us that the limit of $Z(\lambda,\tau)$ as $\xi \rightarrow \infty$ is non-zero and continuous in $\lambda$.  This means that $Z(\lambda,\tau)$ spans the unstable bundle for $\tau \in [-1,1]$, and has a non-singular projection on to $S^3$ for all $\tau$.\end{proof}

\begin{remark} The above Lemma \ref{lemma:nonsingproj} holds for systems with non-symmetric asymptotic limits provided the appropriate scaling is used.  The case of non-symmetric asymptotic limits will be treated in \S \ref{section:extension}, and we will return to this point in Proposition \ref{prop:nonsymmetric}. 
\end{remark}
\subsection{The induced phase on the Hopf bundle} 
\label{subsection:inducedphase}
Let $Z$ and $X^\pm(\lambda)$ be defined as in Lemma \ref{lemma:nonsingproj}, and $\hat{Z}$, $\hat{X}^\pm(\lambda)$ be their projections onto $S^3$, then $\hat{Z}$ defines a mapping to $S^3$ for which the following hold:
\begin{itemize}
\item $\hat{Z}(\lambda, \tau) \rightarrow \hat{X}^-(\lambda)$ as
 $\xi \rightarrow - \infty$
\item $\hat{Z}(\lambda ,\tau ) \rightarrow \zeta(\lambda) \hat{X}^+(\lambda)$ as $\xi \rightarrow +\infty$ 
 for some $\zeta(\lambda) \in \mathbb{C}$
\item $span_\mathbb{C}\{\hat{Z}(\lambda , \tau)\} \equiv W^u(\lambda, \tau)$
 \end{itemize} 
\begin{mydef}
Let $X^\pm(\lambda)$ be reference paths for $A_{\pm\infty}(\lambda)$ respectively.
The \textbf{induced phase}, with respect to $X^\pm(\lambda)$, is the complex scalar such that 
\begin{equation*}
\zeta(\lambda) \hat{X}^+(\lambda) \equiv \hat{Z}(\lambda,+1).
\end{equation*}
\end{mydef}
\begin{remark}Note that by our construction, both $\hat{Z}$ and $\hat{X}^+$ are unit vectors, ie: $\zeta(\lambda)\in S^1$.  In the simple case where $A_{-\infty}(\lambda)\equiv A_{+\infty}(\lambda)$ we may also take $X^+(\lambda) = X^-(\lambda)$ so that the induced phase is clearly a measure of the winding accumulated as the unstable manifold traverses $M$.  For systems with non-symmetric asymptotic limits we will need to adapt the method but the intuition remains the same.   
\end{remark}
Firstly we want to prove that, as a function of $s$, $\zeta$ is differentiable.  Having this condition, we will explore the connection between $\zeta(s)$, the choice of reference paths, the total multiplicity of the eigenvalue in $K^\circ$ and the geometric phase.  
\begin{prop}\label{prop:symmetric}
Let $X^\pm(\lambda)$ be non-degenerate reference paths for $A_{\pm\infty}(\lambda)$ respectively.  For each $\lambda \in K$, let us define $\zeta(\lambda)$ such that \mbox{$\hat{Z}(\lambda,+1)= \zeta(\lambda) \hat{X}^+(\lambda)$.}  We claim that if $\lambda(s)$ is a smooth parametrization of $K$, then
\begin{equation*}
\zeta\big(\lambda(s)\big):[0,1] \rightarrow S^1
\end{equation*}
 is a differentiable function.
\end{prop}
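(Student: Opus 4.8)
The plan is to establish differentiability of $\zeta(\lambda(s))$ by exhibiting it as a composition of differentiable maps, working backwards from the definition $\hat{Z}(\lambda,+1) = \zeta(\lambda)\hat{X}^+(\lambda)$. The essential input is Lemma \ref{lemma:nonsingproj}, which guarantees that $Z(\lambda,\tau)$ extends continuously and non-singularly to $\tau=+1$ and spans $W^u(\lambda,+1)$ there; combined with Lemma 2.10 (analytic choice of $X^+(\lambda)$) and Lemma 6.1 of \cite{AGJ1990}, we should be able to upgrade ``continuous'' to ``differentiable in $s$'' for the limiting vector $Z(\lambda,+1) = \lim_{\xi\to+\infty} Z(\lambda,\tau)$. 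The key observation is that since both $\hat{Z}(\lambda,+1)$ and $\hat{X}^+(\lambda)$ are unit vectors spanning the same complex line $W^u(\lambda,+1)$, the scalar $\zeta(\lambda)$ is literally recovered as an inner product: $\zeta(\lambda) = \langle \hat{Z}(\lambda,+1), \hat{X}^+(\lambda)\rangle_{\mathbb{C}^n}$, because $\hat{X}^+(\lambda)$ has unit norm. So once we know $\lambda(s)\mapsto \hat{Z}(\lambda(s),+1)$ and $\lambda(s)\mapsto \hat{X}^+(\lambda(s))$ are both differentiable $S^3$-valued maps, $\zeta$ is differentiable as the Hermitian pairing of two differentiable vector-valued functions, and $|\zeta|\equiv 1$ forces the image into $S^1$.

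First I would record that $X^+(\lambda)$ is analytic in $\lambda$ on $\Omega$ (Lemma 2.10), hence $\lambda(s)\mapsto X^+(\lambda(s))$ is $C^\infty$ in $s$; since it is non-degenerate it never vanishes, so its spherical projection $\hat{X}^+(\lambda(s)) = X^+(\lambda(s))/\|X^+(\lambda(s))\|$ is differentiable in $s$ (here we only need real differentiability, as noted in the remark following Lemma 2.10). Second, I would treat $Z(\lambda,\tau)$: for $\tau\in[-1,1)$ it is $\mathbb{C}$-differentiable in $\lambda$ by the invariant-manifold construction of Section 4 of \cite{AGJ1990}, and the content of Lemma \ref{lemma:nonsingproj} (via Lemma 6.1 of \cite{AGJ1990}) is that the limit as $\xi\to+\infty$, i.e.\ $Z(\lambda,+1)$, is non-zero and — crucially — \emph{continuously differentiable} in $\lambda$, not merely continuous. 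I would need to quote or re-derive this regularity of the limit; the standard argument is that the convergence $Z(\lambda,\tau)\to Z(\lambda,+1)$ together with the convergence of $\partial_\lambda Z(\lambda,\tau)$ (controlled by exponential dichotomy/roughness estimates uniformly in $\lambda\in K$) is uniform on compacta, so the limit inherits differentiability. Then $\hat{Z}(\lambda(s),+1)$ is differentiable in $s$ by the same projection argument as for $\hat{X}^+$.

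Finally I would assemble: define $\zeta(\lambda(s)) := \langle \hat{Z}(\lambda(s),+1),\, \hat{X}^+(\lambda(s))\rangle_{\mathbb{C}^n}$. Since $\hat{Z}(\lambda,+1)$ and $\hat{X}^+(\lambda)$ are unit vectors spanning the same one-dimensional complex subspace $W^u(\lambda,+1)$ (the span statement in Lemma \ref{lemma:nonsingproj}, plus non-degeneracy of $X^+$ ensuring it spans the same fiber of the unstable bundle), we have $\hat{Z}(\lambda,+1) = \zeta(\lambda)\hat{X}^+(\lambda)$ with $|\zeta(\lambda)| = 1$, so this formula agrees with the definition in the statement and lands in $S^1$. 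As a pairing of two $s$-differentiable $\mathbb{C}^n$-valued functions, $s\mapsto\zeta(\lambda(s))$ is differentiable.

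**The main obstacle** I expect is the middle step: promoting the regularity of the $\xi\to+\infty$ limit $Z(\lambda,+1)$ from continuity (which is what Lemma 6.1 of \cite{AGJ1990} literally provides, and what Lemma \ref{lemma:nonsingproj} asserts) to differentiability in $\lambda$. This requires a uniform-in-$\lambda$ exponential-dichotomy estimate on the $B$ system — so that the parametrized family of center-unstable manifolds, together with their $\lambda$-derivatives, converges uniformly as $\tau\to+1$ — rather than just pointwise convergence for each fixed $\lambda$. If the needed uniformity is not already packaged in the cited lemmas of \cite{AGJ1990}, I would supply it using roughness of exponential dichotomies under the exponentially small non-autonomous perturbation guaranteed by Hypothesis \ref{hyp:wave}, with the rate $\kappa < a/2$ ensuring the correction terms decay faster than the gap in the spectral splitting. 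The rest is routine: projection onto the sphere and the inner-product identity are elementary once the vector-valued maps are known to be differentiable.
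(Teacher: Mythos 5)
Your proposal is correct and follows essentially the same route as the paper: analyticity of $Z(\lambda,\tau)$ in $\lambda$ for $\tau\in[-1,1)$, uniform convergence of $Z(\lambda,\tau)\rightarrow Z(\lambda,+1)$ on $K$, regularity of the limit, real differentiability of the spherical projection, and an algebraic recovery of $\zeta$ (your inner-product formula $\zeta=\langle \hat{Z},\hat{X}^+\rangle$ versus the paper's ``ratio of components'' are interchangeable). The one substantive difference is at the step you flag as the main obstacle. You propose to upgrade continuity of the limit $Z(\lambda,+1)$ to differentiability by separately controlling the convergence of $\partial_\lambda Z(\lambda,\tau)$ via exponential-dichotomy and roughness estimates. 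The paper avoids this entirely: since $Z(\lambda,\tau)$ is $\mathbb{C}$-analytic in $\lambda$ for each fixed $\tau<1$ and the convergence as $\tau\rightarrow+1$ is locally uniform off the spectrum (Lemma 3.7 of \cite{AGJ1990}), hence uniform near $K$, the Weierstrass convergence theorem gives that the limit $Z(\lambda,+1)$ is itself analytic, with convergence of derivatives for free. So the derivative estimates you were prepared to supply are not needed; complex analyticity does that work automatically. Your version would go through if carried out, but it is the harder path, and your hedge that the uniformity might ``already be packaged'' in the cited lemmas is exactly right --- it is, and in the stronger holomorphic form that makes the obstacle dissolve.
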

\begin{proof}
As in Lemma \ref{lemma:nonsingproj} the limit $Z(\lambda,\tau) \rightarrow Z(\lambda,+1)$ is non-zero for each $\lambda$ and $Z(\lambda,+1)$ is continuous.  Moreover, Lemma 3.7 in Alexander, Gardner \& Jones \cite{AGJ1990} tells us the convergence of the manifold $Z(\lambda,\tau)\rightarrow Z(\lambda,+1)$ is locally uniform outside of the spectrum of $\mathcal{L}$ and thus uniform on $K$.  Lemma 4.1 of \cite{AGJ1990} demonstrates that the solutions $Z(\lambda, \tau)$ are analytic in $\lambda$ for $\tau \in [-1,1)$.  But the limit of $Z(\lambda,\tau)$ converges uniformly for $\lambda\in K$, so the limiting function of $\lambda$, $Z(\lambda,+1)$, is also analytic in $\lambda$.  The spherical projection $\hat{Z}(\lambda,\tau)$ is not $\mathbb{C}$ analytic, but it will be real differentiable as a map from $\mathbb{R}^4\rightarrow S^3$.  This means the composition function $\hat{Z}\big(\lambda(s),+1\big)$ is differentiable with respect to the real parameter $s\in[0,1]$. The quantity $\zeta(\lambda)$ is given as the ratio of components of $\hat{Z}(\lambda,1)$ and $\hat{X}^+(\lambda)$ and is therefore differentiable in $s$.\end{proof}
\begin{mydef}
Let $Z$ and $X^\pm(\lambda)$ be defined as in Lemma \ref{lemma:nonsingproj} and fix some $\tau_0 \in [-1,1]$.  The \textbf{relative phase} of $Z(\lambda,\tau_0)$ is defined
\begin{align}
GP\Big(Z(K,\tau_0)\Big) - GP\Big(X^+(K)\Big)
\end{align}
\end{mydef}
\begin{lemma}\label{lemma:chern}
For non-degenerate reference paths $X^\pm(\lambda)$ for $A_{\pm\infty}(\lambda)$ and $Z,\hat{Z}$ as defined in Lemma \ref{lemma:nonsingproj} above, the \textbf{relative phase} of $\hat{Z}(\lambda,+1)$ equals the winding of the induced phase. 
\end{lemma}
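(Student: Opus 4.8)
The plan is to evaluate the geometric-phase integral (\ref{eq:geometricphase}) directly on the loop $s \mapsto \hat{Z}(\lambda(s),+1)$, exploiting the factorization $\hat{Z}(\lambda,+1) = \zeta(\lambda)\,\hat{X}^+(\lambda)$ together with the fact that, after spherical projection, both $\hat Z(\cdot,+1)$ and $\hat X^+$ are unit-length curves in $S^3$, so that the denominators appearing in the phase formula are identically $1$.

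First I would fix notation: put $u(s) := \hat{Z}(\lambda(s),+1)$ and $v(s) := \hat{X}^+(\lambda(s))$ for $s\in[0,1]$. By Lemma \ref{lemma:nonsingproj} and the non-degeneracy of $X^+$, both are everywhere defined, nowhere zero and differentiable in $s$, with $\langle u(s),u(s)\rangle = \langle v(s),v(s)\rangle = 1$ after projection. Since $\lambda(0)=\lambda(1)$ and $X^+(\lambda)$, $Z(\lambda,+1)$ are single-valued over $K$, the curves $u$ and $v$ close up into loops. By Proposition \ref{prop:symmetric} the induced phase $\zeta:[0,1]\to S^1$ is differentiable; being an $S^1$-valued map on the interval it admits a differentiable lift $\phi:[0,1]\to\mathbb{R}$ with $\zeta(s)=e^{i\phi(s)}$, and because $\zeta$ is a loop, $\phi(1)-\phi(0)=2\pi w$, where $w$ is by definition the winding of the induced phase.

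Next I would differentiate the relation $u=\zeta v$ and simplify the integrand of (\ref{eq:geometricphase}). From $u' = \zeta' v + \zeta v'$ and $|\zeta|=|v|=1$,
\[
\langle u',u\rangle = \zeta'\bar\zeta\,\langle v,v\rangle + \zeta\bar\zeta\,\langle v',v\rangle = \zeta'\bar\zeta + \langle v',v\rangle .
\]
Because $\zeta=e^{i\phi}$ we have $\zeta'\bar\zeta = i\phi'$, which is purely imaginary, so
\[
\operatorname{Im}\langle u'(s),u(s)\rangle = \phi'(s) + \operatorname{Im}\langle v'(s),v(s)\rangle .
\]
Substituting into (\ref{eq:geometricphase}), and using that the denominators $\langle u,u\rangle$ and $\langle v,v\rangle$ are $\equiv 1$, linearity of the integral gives
\[
\int_0^1 \operatorname{Im}\langle u',u\rangle\,ds = \big(\phi(1)-\phi(0)\big) + \int_0^1 \operatorname{Im}\langle v',v\rangle\,ds ,
\]
which, divided by $2\pi$, says precisely that the geometric phase of $\hat{Z}(\lambda,+1)$ equals $w$ plus the geometric phase of $\hat{X}^+(\lambda)$.

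The main obstacle is not the computation — which is routine — but making the global statement airtight: one must check that $\zeta$ genuinely closes up into a loop, so its winding number $w$ is a well-defined integer, and that the lift $\phi$ is single-valued and differentiable on all of $[0,1]$. This rests on $Z(\cdot,+1)$ and $X^+$ being single-valued and real-differentiable along the closed curve $K$, which is exactly where the analyticity conclusions of Lemma \ref{lemma:nonsingproj} and Proposition \ref{prop:symmetric} together with the non-degeneracy of $X^+$ are used. A secondary point worth stating explicitly is that the base-point normalization $\theta(\lambda(0))=0$ in the definition of the geometric phase does not affect the total $\theta(1)$, since the latter is simply the value of the displayed integral.
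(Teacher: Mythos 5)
Your proposal is correct and follows essentially the same route as the paper: differentiate the relation $\hat{Z}(\lambda(s),+1)=\zeta(\lambda(s))\hat{X}^+(\lambda(s))$, use that $\zeta$ and $\hat{X}^+$ are unit-length so the cross term reduces to $\zeta'\bar\zeta$ plus the connection evaluated on $\hat{X}^{+\prime}$, and integrate. Your added care about the lift $\phi$ of $\zeta$ and the closing-up of the loops is a welcome tightening of a point the paper leaves implicit, but it is not a different argument.
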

\begin{proof}
 The natural connection on the Hopf bundle, $S^3$, is given by the 1-form
 \begin{equation*}\begin{matrix}
 \omega (V_p)  \equiv \langle V_p,p \rangle_{\mathbb{C}^2}, & & V_p \in T_p\left(S^3\right) \subset T_p\left(\mathbb{C}^2\right)
 \end{matrix} \end{equation*}
so that to calculate the geometric phase of $\hat{Z}\big(\lambda(s),+1\big)$, we consider
\begin{equation*} \begin{matrix}
 & \hat{Z}\big(\lambda(s),+1\big) &=& \zeta\big(\lambda(s)\big) \hat{X}^+\big(\lambda(s)\big) \\ \\
 \Rightarrow & \frac{d}{ds} \hat{Z}\big(\lambda(s),+1\big) &=& \zeta'\big(\lambda(s)\big)\lambda'(s) \hat{X}^+\big(\lambda(s)\big)+ \zeta\big(\lambda(s)\big) \frac{d}{ds}\hat{X}^+\big(\lambda(s)\big)\\ \\
 \Rightarrow & \omega\left(\frac{d}{ds} \hat{Z}\big(\lambda(s),+1\big)\right) &=& \overline{\zeta(s)} \zeta'\big(\lambda(s)\big) \lambda'(s) + \omega\left(\frac{d}{ds}\hat{X}^+\big(\lambda(s)\big)\right)
 \end{matrix} \end{equation*}
because $\hat{X}^+\big(\lambda(s)\big)$ is a unit vector and $\zeta(s)\in S^1$.  But the geometric phase of $\hat{Z}(\lambda,+1)$ is given by  
\begin{eqnarray}
 GP\Big(Z(K,+1)\Big)
  &=&
\displaystyle
 \frac{1}{2 \pi i} \int^1_0 \omega\left(\frac{d}{ds}\hat{Z}\big(\lambda(s),+1\big)\right) ds
\nonumber\\
  &=& \displaystyle
\frac{1}{2 \pi i} \int^1_0 \left[\overline{\zeta(s)} \zeta'\big(\lambda(s)\big) \lambda'(s) + \omega\left(\frac{d}{ds}\hat{X}^+\big(\lambda(s)\big)\right)\right] ds
\nonumber \\
  &=& \displaystyle
\frac{1}{2 \pi i} \int^1_0 \frac{\zeta'\big(\lambda(s)\big)}{\zeta\big(\lambda(s)\big)} \lambda'(s)ds + \frac{1}{2\pi i}\int^1_0\omega\left(\frac{d}{ds}\hat{X}^+\big(\lambda(s)\big)\right)  ds\\
  &=& \displaystyle
\frac{1}{2 \pi i} \int^1_0 \frac{\zeta'\big(\lambda(s)\big)}{\zeta\big(\lambda(s)\big)} \lambda'(s)ds + GP\Big(X^+(K)\Big)\,,
\label{theta-secondterm} 
  \end{eqnarray}  
  so that the relative phase of $Z(\lambda(s),+1)$ equals the winding of the induced phase.\end{proof}

To elaborate the dependence of the relative phase upon the reference paths
we introduce two lemmas.
\begin{lemma}\label{lemma:unique}
Given the contour $K$, let $V_1(\lambda)$ be a non-degenerate reference path and $V_2(\lambda)$ be a meromorphic reference path for $A_{+\infty}(\lambda)$.  Then
\begin{align}
GP\Big(V_1(K)\Big) = GP\Big(V_2(K)\Big) + Ind(V_2)
\end{align}
where $Ind(V_2)$ is plus or minus multiplicity of any zero or pole for $V_2$ in $K^\circ$.
\end{lemma}
\begin{proof}
Suppose $V_2$ has no essential singularity in $K^\circ$.  This is a generic choice as $V_2$ is an eigenvector of $A_{+\infty}(\lambda)$; $\lambda$ appears linearly in $\mathcal{L} - \lambda$ so that the only generic degeneracy of $V_2$ in $K^\circ$ is a pole or a zero. As eigenvectors, there must be some smooth scaling $\sigma:K \rightarrow \mathbb{C}^*$ such that $V_1(\lambda) \equiv \sigma(\lambda) V_2(\lambda)$.  Moreover, we can extend $\sigma(\lambda)$ over $K^\circ$ up to any zeros or poles enclosed by $K$. Consider the connection of $V_1\big(\lambda(s)\big)$, for some parametrization $\lambda(s)$,
\begin{align*}
\omega\left(\frac{d}{ds} \hat{V}_1\big(\lambda(s)\big)\right) & = \frac{d}{ds}\hat{\sigma}\big(\lambda(s)\big)\overline{\hat{\sigma}}\big(\lambda(s)\big) + \omega\left(\frac{d}{ds} \hat{V}_2\big(\lambda(s)\big)\right) 
\end{align*}
where $\hat{\sigma}\big(\lambda(s)\big)\equiv \frac{\sigma\big(\lambda(s)\big)}{\big| \sigma\big(\lambda(s)\big)\big|}$.  Therefore the geometric phase of $V_1$ equals that of $V_2$ plus the winding of $\hat{\sigma}\big(\lambda(s)\big)$; this agrees with $Ind(V_2)$ by the argument principle. 
\end{proof}
\begin{lemma}\label{lemma:initialcondition}
Let $V(\lambda)$ be a reference path for $A_{-\infty}(\lambda)$, with corresponding solution $V(\lambda,\tau)$, such that $V(\lambda)$ has a pole or zero in $K^\circ$.  Then the geometric phase of $V(\lambda,+1)$ equals the geometric phase of a solution evolved from a non-degenerate reference path plus the index of its degeneracy.
\end{lemma}
\begin{proof}
By definition $V(\lambda)$ is an eigenvector and therefore there must be some smooth scaling $\alpha:K \rightarrow \mathbb{C}^*$ and non-degenerate reference path $X^-(\lambda)$ such that  
\begin{align}
V^-(\lambda) \equiv \alpha(\lambda) X^-(\lambda)
\end{align}
Let $V$ and $Z$ denote solutions in the center unstable manifolds for these reference paths respectively, then by linearity of the flow the connection of the solution corresponding to $V(\lambda)$ is given
\begin{equation}\begin{matrix}
 &\hat{V}(\lambda,1) &=& \hat{\alpha}(\lambda) \hat{Z} (\lambda,1)\\
\Rightarrow&\omega\left(\frac{d}{ds} \hat{V}\big(\lambda(s)\big)\right) &=& \frac{d}{ds}\hat{\alpha}\big(\lambda(s)\big)\overline{\hat{\alpha}}\big(\lambda(s)\big) + \omega\left(\frac{d}{ds} \hat{Z}\big(\lambda(s)\big)\right) 
\end{matrix}\end{equation}
\end{proof}
\begin{cor}\label{cor:phase}
Given a choice of reference paths $X^{\pm}(\lambda)$ for $A_{\pm\infty}(\lambda)$, and $Z(\lambda,\tau)$ as defined above, the \textbf{relative phase} of $Z(\lambda,+1)$,
\begin{align}
GP\Big(Z(K,+1)\Big) - GP\Big(X^+(K)\Big),
\end{align}
equals the winding of the induced phase if and only if $X^\pm(\lambda)$ each have the same index of degeneracy.  In particular, the relative phase is the winding of the induced phase when $X^\pm(\lambda)$ are non-degenerate.
\end{cor}
\begin{proof}
This is a direct consequence of Lemmas \ref{lemma:chern}, \ref{lemma:unique} and \ref{lemma:initialcondition}.
\end{proof}

\subsection{The trivializations and the transition map} 
The unstable bundle is a non-trivial complex line bundle contained in the ambient trivial $\mathbb{C}^2$ vector bundle over the parameter sphere; for fixed $\lambda$, as $\tau$ moves between $\pm1$, the parameters in the sphere are the values $(\lambda,\tau)$ which describe the motion of solutions $Z(\lambda,\tau)$.  Taking a trivialization of this line bundle amounts to finding a linear isomorphism 
\begin{align*}
\phi_\alpha :\mathcal{U}_\alpha \times \mathbb{C}  \hookrightarrow & \mathcal{U}_\alpha \times \mathbb{C}^2
\end{align*}
where $\mathcal{U}_\alpha$ is a neighborhood in $M$, and the image of $\phi_\alpha$ is the unstable bundle over $\mathcal{U}_\alpha$.

\begin{mydef} Define the following:
\begin{itemize}

\item Let $H_-$ be the \textbf{lower hemisphere} of $M$, given by 
\begin{equation*}\label{eq:lhemisphere}
K^\circ \times \{\tau = -1 \} \cup K\times \{\tau \in [-1,1] \} \cup V \times \{ \tau =+1\}
\end{equation*}
where $V$ is an open neighborhood in $K^\circ$ homotopy equivalent to $S^1$ with $K$ in the closure of $V$.  Assume no eigenvalue of $\mathcal{L}$ is contained in $V$.  Thus $H_-$ is an open neighborhood of $M$.

\item Let $H_+$ be the \textbf{upper hemisphere} of $M$, given by 
\begin{equation*}\label{eq:uhemisphere}
K^\circ \times \{\tau = +1 \} \cup K\times \{\tau \in (-1,1] \}
\end{equation*}
so $H^+$ is an open neighborhood of $M$.

\item Let $Z$ and $\hat{Z}$ be as given in \S \ref{subsection:inducedphase}; abusing notation, let $Z$ and $\hat{Z}$ also denote their extensions into $V \times \{\tau = +1\}$ so that for $\lambda \in V$, $Z(\lambda, +1)$ is smoothly compatible with the values $Z(\lambda, +1)$, $\lambda \in K$.

\item For some non-degenerate reference path $X^+(\lambda)$ for $A_{+\infty}(\lambda)$, let $Y(\lambda,\tau)$ be in the center stable manifold of $X^+(\lambda)$.  Extend $Y$ into $K^\circ \times \{\tau = +1\}$ so that for $\lambda \in K^\circ$, $Y(\lambda, +1)$ is an eigenvector for the unstable direction of $A_{+\infty}(\lambda)$, smoothly compatible with the values on the boundary $K$.  We define the spherical projection of $Y$ to be $\hat{Y}$.
\end{itemize}
\end{mydef}
For fixed $(\lambda, \tau)$, where they are defined, $\hat{Z},\hat{Y}$ each span the unstable bundle.  $\hat{Z}$ is defined over $H_-$ and $\hat{Y}$ is defined over $H_+$, so that for any point $p$ in the unstable bundle we may choose a unique $z\in\mathbb{C}$ for which $p \equiv (\lambda, \tau, z \hat{Z})$ if $p$ is is over $H_-$, or choose a unique $y \in \mathbb{C}$ for which $p \equiv (\lambda , \tau, y \hat{Y})$ if $p$ is over $H_+$.
Thus the projections $\hat{Z},\hat{Y}$ give choices of trivializations for the unstable bundle over $H_-,H_+$ respectively. 
\begin{mydef}
Given $Z$, $Y$ as above, and a choice of hemispheres $H_{\pm}$, define the following maps:
\begin{equation*} \begin{matrix} \phi_- : & H_- \times \mathbb{C} & \hookrightarrow & H_- \times \mathbb{C}^2\\  \\
 & (\lambda, \tau, z) & \mapsto & \left(\lambda, \tau , z \hat{Z}(\lambda, \tau)\right)\\ \\
 \phi_+: & H_+\times \mathbb{C}& \hookrightarrow & H_+ \times \mathbb{C}^2 \\ \\
  & (\lambda , \tau , y) & \mapsto & \left(\lambda , \tau , y \hat{Y}(\lambda, \tau)\right) \\
  \end{matrix} \end{equation*}
These maps are the \textbf{trivializations of the unstable bundle} with respect to $H_\pm$, $\hat{Z}$ and $\hat{Y}$.  The maps $\phi_\pm$ are linear vector bundle isomorphisms, and their composition $\hat{\phi}\equiv\phi_+^{-1} \circ \phi_-$ defined on $H_- \cap H_+ \times \mathbb{C}$ is the \textbf{transition map} of the unstable bundle. 
\end{mydef}

Fixing $\tau$ such that $(\lambda,\tau) \in H_- \cap H_+$ $\forall \lambda \in K$, the transition map can be seen as a mapping from $S^1$ to $GL(1,\mathbb{C})$, ie: we take the restriction of the composition of trivializations to the $\lambda$ parameter
\begin{equation*} \begin{matrix}
{\phi_+}^{-1} \circ \phi_-(\lambda, \tau, -) : & K \cong S^1 & \rightarrow & GL(1,\mathbb{C})  \\ \\
 & (\lambda, -) & \mapsto & \hat{\phi}(-) \\ \\
 \hat{\phi}: & \mathbb{C} & \rightarrow & \mathbb{C}  \\ \\
 & z & \mapsto & y
  \end{matrix} \end{equation*}
where 
\begin{equation*}
z(\lambda,\tau)\hat{Z}(\lambda, \tau) = y(\lambda, \tau)\hat{Y}(\lambda, \tau)
\end{equation*}
Viewed this way 
\begin{equation*}
{\phi_+}^{-1} \circ \phi_-(-, \tau,-) \equiv \hat{\phi}_{\tau}
\end{equation*}
is seen to have a representation in the fundamental group of $GL(1,\mathbb{C})\cong \mathbb{C}^\ast$.  The fundamental group $\pi_1\left(\mathbb{C}^\ast\right) \cong \pi_1\left(S^1\right)\cong \mathbb{Z}$, so we identify $\left[\hat{\phi}_\tau\right] \cong d$ where $d\in \mathbb{Z}$ is the winding of $\hat{\phi}_\tau$ about $K$.
\begin{lemma}\label{lemma:eig}The winding of the map $\hat{\phi}_{\tau}(\lambda)$ is the \textbf{Chern number} of the unstable bundle, and equals the total multiplicity of the eigenvalues contained in $K^\circ$.
\end{lemma}
\begin{proof}
See Alexander, Gardner \& Jones \cite{AGJ1990}.
\end{proof}
\begin{lemma}\label{lemma:inducedphase}
For a choice of non-degenerate reference paths $X^\pm(\lambda)$ for $A_{\pm\infty}(\lambda)$, the winding of the induced phase equals the Chern number of the unstable bundle.
\end{lemma}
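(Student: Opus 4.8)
The plan is to identify the transition map of the unstable bundle, restricted to the equator $K$ at the level $\tau=+1$, with multiplication by the induced phase $\zeta(\lambda)$, and then to invoke the homotopy invariance of the winding number in $\pi_1(GL(1,\mathbb{C}))\cong\pi_1(S^1)$ as $\tau$ ranges over the overlap $H_-\cap H_+$. This reduces the lemma to the single computation of $\hat{\phi}_{(\tau,z)}$ at $\tau=+1$.

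First I would check that $(\lambda,+1)$ lies in $H_-\cap H_+$ for every $\lambda\in K$: it is in $H_+\supset K\times(-1,1]$, and it is in $H_-\supset K\times[-1,1]$. More generally $K\times(-1,1]$ is contained in the overlap, so the transition map $\hat{\phi}=\phi_+^{-1}\circ\phi_-$ is defined and nowhere zero on all of $K\times(-1,1]$. Hence the class $[\hat{\phi}_{(\tau,z)}(\lambda)]\in\pi_1(S^1)$ does not depend on $\tau\in(-1,1]$ by continuity, and by the Remark preceding the lemma this common integer is the Chern class $m$ of the unstable bundle, which equals the multiplicity of the eigenvalues of $\mathcal{L}$ in $K^\circ$.

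Next I would evaluate at $\tau=+1$. Since $Y(\lambda,\tau)\to X^+(\lambda)$ as $\xi\to+\infty$ we have $Y(\lambda,+1)=X^+(\lambda)$, hence $\hat{Y}(\lambda,+1)=\hat{X}^+(\lambda)$; and by the definition of the induced phase, $\hat{Z}(\lambda,+1)=\zeta(\lambda)\hat{X}^+(\lambda)$ with $\zeta(\lambda)\in S^1$ (by Proposition~\ref{prop:symmetric}, $\zeta$ is a smooth $S^1$-valued loop). The defining relation $z\,\hat{Z}(\lambda,+1)=y\,\hat{Y}(\lambda,+1)$ of the transition map therefore reads $z\,\zeta(\lambda)\hat{X}^+(\lambda)=y\,\hat{X}^+(\lambda)$, so $y=\zeta(\lambda)z$; that is, $\hat{\phi}_{(+1,z)}$ is precisely multiplication by $\zeta(\lambda)$, whose class in $\pi_1(GL(1,\mathbb{C}))$ is the winding number of $\zeta\colon K\to S^1$. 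Combining this with the previous paragraph gives the winding of the induced phase $=[\hat{\phi}_{(+1,z)}]=m=$ the Chern class of the unstable bundle.

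I expect the main obstacle to be bookkeeping rather than anything conceptual: one must verify that the extensions of $Y$ over $K^\circ\times\{\tau=+1\}$ and of $Z$ over $V\times\{\tau=+1\}$, which are needed to make $\phi_\pm$ genuine trivializations over full hemispheres, do not contribute any spurious winding along $K$ itself. This is exactly the point at which non-degeneracy of the reference paths $X^\pm(\lambda)$ is used, and it is why the discussion following Lemma~\ref{lemma:chern} --- that the rescaling $\hat{\sigma}$ relating two non-degenerate reference paths has zero winding --- is relevant here. One should also fix the orientation conventions (using the standard orientation of $K$ from Hypothesis~\ref{hyp:K}) so that the degree of the equatorial transition function comes out equal to $+m$ rather than $-m$.
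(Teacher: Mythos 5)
Your argument is correct and follows essentially the same route as the paper: both identify the equatorial transition map $\hat{\phi}_{(+1,z)}$ with multiplication by $\zeta(\lambda)$ via $\hat{Z}(\lambda,+1)=\zeta(\lambda)\hat{X}^+(\lambda)=\zeta(\lambda)\hat{Y}(\lambda,+1)$, and then read off the Chern class as the winding of $\zeta$ using the remark that the degree of the transition map computes the Chern class. Your additional checks (that $(\lambda,+1)$ lies in the overlap, homotopy invariance in $\tau$, and the caveat about spurious winding from the extensions over the caps) are sound refinements of the same proof rather than a different approach.
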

\begin{proof}
  Notice that for $(\lambda,+1)\in H_-\cap H_+$ the transition map can be described through the induced phase:
  \begin{equation*} \begin{matrix}
  z & \mapsto & z \hat{Z}\big(\lambda(s) , +1\big) & \equiv &z \zeta\big(\lambda(s)\big) \hat{X}^+\big(\lambda(s)\big)& \equiv &z \zeta\big(\lambda(s)\big) \hat{Y}\big(\lambda(s), +1\big)  & \mapsto & z \zeta\big(\lambda(s)\big) 
  
\end{matrix}  \end{equation*}
so that the transition map $\hat{\phi}$ is exactly given by $ z \mapsto \zeta\big(\lambda(s)\big) z$.
But the number of windings $\zeta\big(\lambda(s)\big)$ takes around the $K$ is given by 
\begin{align}
d = &  \frac{1}{2\pi i}\int_{\zeta(K)} \frac{1}{z} dz  \\
\label{eq:chernclass}  = &  \frac{1}{2\pi i} \int^1_0 \frac{\zeta'\big(\lambda(s)\big)}{ \zeta\big(\lambda(s)\big)} \lambda'(s) ds 
 \end{align}
so that the Chern number of the unstable bundle is given by the equation (\ref{eq:chernclass}) for the winding of the induced phase.\end{proof}   
 \subsection{The geometric phase and the transition map}
 We have now found the relationship between the induced phase $\zeta(s)$, for non-degenerate reference paths, and the Chern number of the unstable bundle over $M$.  However, we still need to interpret this in terms of the geometric phase in the Hopf bundle for a solution in the unstable manifold.  Let $Z,\hat{Z}$ be defined as in \S \ref{subsection:inducedphase}.  Then each $Z,\hat{Z}\in W^u$ for all $\xi$ and
 \begin{equation*} \begin{matrix}
 Z^\sharp &:=& e^{i \mu_2\big(\lambda(s)\big)\xi} Z
 \end{matrix}\end{equation*}
 is the corresponding solution to the $A$ system at $\xi$.  We want to show that the geometric phase of the two solutions agree for each $\xi$, and relate the phase to the winding of the transition map for the unstable bundle as $\xi \rightarrow +\infty$.  

\begin{theorem}[\textbf{The method of geometric phase---case I}]\label{theorem:casei}
Given a choice of reference paths $X^\pm(\lambda)$ for $A_{\pm\infty}(\lambda)$ and $Z$ defined as in \S \ref{subsection:inducedphase}, the asymptotic relative phase of $Z\big(\lambda,\tau(\xi)\big)$,
\begin{align}
\lim_{\xi\rightarrow\infty}GP\Big(Z\big(K,\tau(\xi)\big)\big) - GP\Big(X^+(K)\Big),
\end{align}
equals the total multiplicity of the eigenvalues enclosed by $K$ if $X^\pm(\lambda)$ are non-degenerate.
\end{theorem}
\begin{proof}
This theorem is a direct consequence of Lemmas \ref{lemma:eig} and \ref{lemma:inducedphase}, and Corollary \ref{cor:phase}.\end{proof}

Finally we will explore the relationship between the solutions to the $B$ system where we calculate the phase in the proof, and the solutions to the $A$ system.  

\begin{prop}\label{prop:ABequiv}
Let $X^-(\lambda)$ be a reference path for $A_{\pm\infty}$ and suppose $Z$ and $Z^\sharp$ are solutions to the $B$ and $A$ system respectively, and that they agree at $\xi_0$; then for arbitrary finite $\xi$ the geometric phase of $Z\big(\lambda,\tau(\xi)\big)$ and $Z^\sharp\big(\lambda,\tau(\xi)\big)$ agree. 
\end{prop}
\begin{proof}
Suppose $\mu_2(\lambda)\equiv \alpha(\lambda) + i \beta(\lambda)$, and recall the solution to the $A$ system given by 
\begin{equation*}
Z^\sharp\big(\lambda,\tau(\xi)\big) = e^{\mu_2(\lambda)(\xi-\xi_0)}Z\big(\lambda,\tau(\xi)\big).
\end{equation*}  
Without loss of generality, we will suppose $\xi_0 = 0$ so that $Z^\sharp$ is the unique solution to the $A$ system that agrees with $Z$ at $\xi=0$; the proof will not depend on the constant.  The projection of $Z^\sharp$ onto the Hopf bundle is given by \begin{equation*}
\hat{Z}^\sharp\big(\lambda,\tau(\xi)\big) \equiv e^{i\beta(\lambda)\xi}\hat{Z}\big(\lambda,\tau(\xi)\big),
\end{equation*} 
so that calculating the phase:
\begin{equation}\label{eq:ABequiv} \begin{matrix}
 & \hat{Z}^\sharp\big(\lambda(s),\tau(\xi)\big) &=& e^{i\beta\big(\lambda(s)\big)\xi}\hat{Z}\big(\lambda(s),\tau(\xi)\big) \\ \\
 \Rightarrow & \frac{d}{ds} \hat{Z}^\sharp\big(\lambda(s),\tau(\xi)\big) &=& i\beta '\big(\lambda(s)\big)\lambda'(s) \xi e^{i\beta\big(\lambda(s)\big)\xi}\hat{Z}\big(\lambda(s),\tau(\xi)\big) + e^{i\beta\big(\lambda(s)\big)\xi} \frac{d}{ds} \hat{Z}\big(\lambda(s),\tau(\xi)\big)\\ \\
 \Rightarrow & \omega\left(\frac{d}{ds} \hat{Z}^\sharp\big(\lambda(s),\tau(\xi)\big)\right) &=&i\beta ' \big(\lambda(s)\big)\lambda'(s)\xi + \omega\left(\frac{d}{ds} \hat{Z}\big(\lambda(s),\tau( \xi)\big) \right)
 \end{matrix} \end{equation}
But $\mu_2(\lambda),\mu_2'(\lambda)$ are each holomorphic by construction so that 
\begin{equation*}
\int_K \mu_2'(\lambda)= \int_K \alpha'(\lambda)+i \int_K\beta'(\lambda) \equiv 0
\end{equation*}
and the real and imaginary parts both must equal zero. The $i\beta'\big(\lambda(s)\big)\lambda'(s)\xi$ term thus vanishes in equation (\ref{eq:ABequiv}) when integrated for $s\in[0,1]$.  This proves the geometric phase of $Z^\sharp(\lambda,\xi)$ of the $A$ system corresponds to the phase of the solution $Z(\lambda,\xi)$ for the $B$ system for arbitrary $\xi$.  For systems defined on $\mathbb{C}^2$ we may thus obtain the total multiplicity of the eigenvalues contained in $K^\circ$ with a solution to either the $A$ or $B$ system utilizing the method of geometric phase.\end{proof}

\section{Extending the two dimensional method}
\label{section:extension}
In this section we adapt the techniques developed for systems on $\mathbb{C}^2$ to take advantage of the full generality in which the unstable bundle can be constructed.  Firstly we will extend the techniques to the case when there are $k>1$ unstable directions, and once we have a general method, we will consider systems with non-symmetric asymptotic limits.
\subsection{The $n-$dimensional case and determinant unstable bundle}

Suppose now the operator $\mathcal{L}$ defines an $A$ system and $B$ system on $\mathbb{C}^n$.  If for all $\lambda \in \Omega$, $A_\infty = A_{\pm\infty}$ has one unstable direction and $n-1$ stable directions, the proof in two dimensions holds; although the ambient complex dimension has increased, the unstable bundle is still one dimensional.  Likewise if the stable manifold is $1$-dimensional, we may calculate the Chern number of the analogous stable bundle without any serious modification of the method.

Suppose more generally there are $1< k < n-1$ unstable directions for the system $A_\infty$.  The $k$ dimensional unstable bundle is again formed from the unstable manifold $W^u(\lambda,\tau)$ of the critical point $(0,-1)$, and the Chern number of this vector bundle equals the total multiplicity of the eigenvalues contained in $K^\circ$.  However, it is no longer sufficient to consider solutions only corresponding to a single eigenvector, as this will not capture the information of the full unstable bundle.  To write the transition map of the unstable bundle, $E$, as a value in $S^1$ we must introduce the determinant bundle constructed from a $k$ dimensional vector bundle.  This technique uses subspace coordinates, reducing the dimension of the unstable bundle to one, while raising the ambient complex dimension of the system.  With respect to this coordinatization, the unstable manifold is a trajectory on which we can again calculate the geometric phase, and the goal is thus to apply the same method we use in $\mathbb{C}^2$ to the determinant bundle of the $k$ dimensional unstable space.  

\begin{mydef}
The $\mathbf{k^{\text{th}}}$ \textbf{exterior power of} $\mathbf{\mathbb{C}^n}$, $\Lambda^k\left(\mathbb{C}^n\right) \equiv \mathbb{C}^{n \choose k}$, is the complex vector space of non-degenerate $k$ forms on $\mathbb{C}^n$. $\Lambda^k\left(\mathbb{C}^n\right)$ is spanned by
\begin{equation*}\begin{matrix}
v = v_1 \wedge \cdots \wedge v_k & & v_i \in \mathbb{C}^n\hspace{2mm} \forall i
\end{matrix}\end{equation*}
and $v$ is non-degenerate provided $\{v_i\}_1^k$ are linearly independent in $\mathbb{C}^n$. 
\end{mydef}

\begin{mydef}
Given a dynamical system 
\begin{equation*} \begin{matrix}
X'=AX & &  ' = \frac{d}{d\xi}
 & & X\in \mathbb{C}^n
\end{matrix}\end{equation*} 
 let  $Y = Y_1 \wedge \cdots \wedge Y_k\in \Lambda^k\left(\mathbb{C}^n\right)$.  The associated $\mathbf{A^{(k)}}$\textbf{ system} on $\Lambda^k\left(\mathbb{C}^n\right)$ is generated by
\begin{align} 
 Y' &=A^{(k)} Y \\ 
    &:= AY_1\wedge \cdots \wedge Y_k + \cdots + Y_1\wedge \cdots \wedge AY_k  
\label{eq:A^k}
\end{align}
\end{mydef}
\begin{remark}
By equation (\ref{eq:A^k}) it is clear that the eigenvalues for the $A^{(k)}$ system are the sums of all $k$-tuples of eigenvalues for $A$.  Thus for $A^{(k)}$, there is a unique eigenvalue of largest positive real part given by the sum of all eigenvalues with positive real part, including multiplicity.  
\end{remark}

\begin{mydef}
Suppose $\mathcal{L}$ defines a system of the form (\ref{eq:symmetric}) on $\mathbb{C}^n$.  Denote $\{ \mu^\pm_1, \cdots , \mu^\pm_k \} $ the eigenvalues of positive real part for $A_{\pm\infty}(\lambda)$ respectively, and define $\mu^\pm:= \sum^k_{i=1} \mu^\pm_i $.  
The corresponding $\mathbf{A^{(k)}}$\textbf{ and }$\mathbf{B^{(k)}}$\textbf{ systems} on $\Lambda^k\left(\mathbb{C}^n\right)\equiv \mathbb{C}^{n\choose k}$ are defined:
\begin{equation}\label{eq:A^ksystem} \begin{matrix}
Y'=A^{(k)}(\lambda, \tau) Y & A^{(k)}_{\pm\infty}(\lambda) = \lim_{\xi \rightarrow \pm\infty} A^{(k)}(\lambda,\tau) \\ \\
\tau' = \kappa(1-\tau^2) & \\ \\
\end{matrix} \end{equation}

\begin{equation}\label{eq:B^ksystem} \begin{matrix}
B^{(k)}(\lambda, \tau):= \left(A^{(k)}(\lambda,\tau)- \mu^-(\lambda)\right) & X'=B^{(k)}X \\ \\
  B^{(k)}_{\pm\infty} (\lambda) := \lim_{\xi \rightarrow \pm \infty} B^{(k)} (\lambda, \tau) & \tau' = \kappa(1-\tau^2) \\ \\
\end{matrix} \end{equation}
\end{mydef}
Allen \& Bridges \cite{allenbridges2002} demonstrate that there is an explicit algorithm to compute the $A^{(k)}$ system (\ref{eq:A^k}) on the exterior power $\Lambda^k\left(\mathbb{C}^n\right)$ where the coefficients of $A^{(k)}$ are calculated through the inner product on $\mathbb{C}^n$.  For the symmetric form of system (\ref{eq:B^ksystem}), $B^{(k)}_\infty$ has a center direction of critical points, an unstable real direction, and all other directions are stable; the line of critical points is given by the span of the wedge of linearly independent eigenvectors corresponding to $\{ \mu^-_1, \cdots , \mu^-_k\}$.

\begin{mydef}\label{def:detbundle}
For all $(\lambda,\tau) \in M$, let $\{w_i(\lambda,\tau)\}^k_1$ be a spanning set for the unstable manifold $W^u$ at $(\lambda,\tau)$, and define
\begin{equation*}
\Lambda^k\big(W^u(\lambda,\tau)\big) \equiv span_{\mathbb{C}}\{w_1(\lambda,\tau) \wedge \cdots \wedge w_k(\lambda,\tau) \}\,.
\end{equation*}
Then $\Lambda^k\big(W^u(\lambda,\tau)\big)$ can be taken as the fiber for a non-trivial
vector bundle $\Lambda^k(E)$ over $M$ with projection
$\pi_{E^k}:E^k\to M$,
\begin{equation}\begin{CD}
\Lambda^k\big(W^u(\lambda,\tau)\big) @>>> \Lambda^k(E)\\
& & @VVV \pi_{E^k} \\
& & M
\end{CD}\end{equation}  
$\Lambda^k(E)$ is called the \textbf{determinant bundle of the unstable manifold} over $M$; henceforth we will refer to $\Lambda^k(E)$ simply as the \textbf{determinant bundle}.  The determinant bundle is a \textbf{line bundle}.
\end{mydef}
Let the transition map of the unstable bundle $E$ be denoted $\hat{\phi}_E$.  The determinant bundle acquires its namesake from the construction of its transition map $\hat{\phi}^k_E$.  The transition map of the $k$-dimensional unstable bundle is a $\lambda$ dependent, non-singular mapping of $k$-frames of $n$ dimensional complex vectors.  Restricting to the equator of $M$, we thus interpret the transition map
\begin{equation*} \begin{matrix}
\hat{\phi}_E: & S^1 &\rightarrow& GL(\mathbb{C},k) \\ \\
 & \lambda &\mapsto & \psi(\lambda) \\ \\
\end{matrix} \end{equation*}
so that it defines an element of $\pi_1\big(GL(\mathbb{C},k)\big)$.  But notice, $\det\left(\hat{\phi}_E(\lambda)\right) \in GL(\mathbb{C},1)$ for all $\lambda \in K$, so that the determinant induces a homomorphism of fundamental groups
\begin{equation*} \begin{matrix}
\det_* : & \pi_1\big(GL(\mathbb{C},k)\big) & \rightarrow & \pi\big(GL(\mathbb{C},1)\big) \\ \\
	& \left[\hat{\phi}_E\right] & \mapsto & \left[\det\circ \hat{\phi}_E\right] \\
\end{matrix} \end{equation*}
\begin{mydef}
The mapping,  
\begin{align}
\det \circ \hat{\phi}_E(\lambda) \equiv  \hat{\phi}^k_E,
\end{align}
is the \textbf{transition map of the determinant bundle}.  
\end{mydef}
\begin{lemma}
The Chern number of the determinant bundle of the unstable manifold over $M$ equals the Chern number of the unstable bundle, and therefore the total multiplicity of eigenvalues for $\mathcal{L}$ contained in $K$.
\end{lemma}
\begin{proof}
This is proven by Alexander, Gardner \& Jones \cite{AGJ1990}
\end{proof}
\subsection{Solving the $k$ unstable direction case}
For systems (\ref{eq:A^ksystem}) with symmetric asymptotic limits, we may utilize the method of geometric phase, calculating the geometric phase of the solution $Z(\lambda,\tau)$ corresponding to the eigenvalue of most positive real part, where $Z(\lambda,\tau)$ describes the determinant bundle.  These modifications are presented in the following theorem.
\begin{theorem}[\textbf{The method of geometric phase---case II}]\label{theorem:caseii}
Let the $A^{(k)}$ and $B^{(k)}$ systems be defined as in equations (\ref{eq:A^ksystem}) and (\ref{eq:B^ksystem}) above.  Let $X^\pm(\lambda)$ be reference paths for $A^{(k)}_{\pm\infty}(\lambda)$ and suppose $Z\big(\lambda,\tau(\xi)\big)$ is in the center-unstable manifold of $X^-(\lambda)$ with respect to $B^{(k)}$.  Then the asymptotic relative phase of $Z\big(\lambda,
\tau(\xi)\big)$,
\begin{align}
\lim_{\xi\rightarrow\infty}GP\Big(Z\big(K,\tau(\xi)\big)\Big) - GP\Big(X^+(K)\Big),
\end{align}
 equals the total multiplicity of the eigenvalues enclosed by the contour $K$ if $X^\pm(\lambda)$ are non-degenerate. 
\end{theorem}
\begin{proof}
As in the two dimensional case, $Z$ forms a $\mathbb{C}$ analytic section of the line bundle over $M$ for $\tau \in[-1,1]$.  \S 4 of Alexander, Gardner \& Jones \cite{AGJ1990} shows that this solution is analytic on $[-1,+1)$ and \S 6 shows that the limit as $\xi\rightarrow +\infty$ is non-zero and continuous.  The proof of locally uniform convergence in Proposition \ref{prop:symmetric} holds here as well, so that the extension of $Z$ to $Z(\lambda,+1)$ is $\mathbb{C}$ analytic.

Therefore we take the projection of $Z$, $\hat{Z}$, onto the sphere 
\begin{equation*}
S^{\left(2 {n \choose k} -1\right)} \subset \mathbb{C}^{n \choose k} \cong \Lambda^k\left(\mathbb{C}^n\right),
\end{equation*}
and with respect to $X^\pm(\lambda)$ we again obtain the induced phase $\zeta(\lambda)$.

Let $Y(\lambda,\tau)$ be a solution to the $B^{(k)}$ system that is in the center-stable manifold of a non-degenerate reference path $\hat{X}^+(\lambda)$ at $\tau=+1$, and let $\hat{Y}$ be the projection of this solution.  The trivializations of the determinant bundle can be expressed in terms of $\hat{Z}$ and $\hat{Y}$, which yields transition map 
\begin{equation*}\hat{Z}(\lambda,+1) \equiv \zeta(\lambda) \hat{X}^+(\lambda) \equiv \zeta(\lambda) \hat{Y}(\lambda,+1)
\end{equation*}
The winding of $\zeta(\lambda)$ is thus equal to the Chern number of the determinant bundle, and is related to the geometric phase of $Z(\lambda,+1)$ by the same formulation described in the two dimensional case.\end{proof}

Thus in the case of $k$ unstable directions, we may calculate the total multiplicity of the eigenvalues contained in the region $K^\circ$ by an adaptation of the method of geometric phase applied to the determinant bundle of the unstable manifold.  The same proof as in Lemma \ref{prop:ABequiv} will demonstrate that the geometric phase is equivalent in both the $A^{(k)}$ and $B^{(k)}$ systems.

\subsection{The general case for systems on unbounded domains}
In the preceding sections we developed a method for finding the total multiplicity of eigenvalues for $\mathcal{L}$ in the region $K^\circ$, but the method was restricted to the case for which $\lim_{\xi\rightarrow -\infty} A(\lambda,\xi) \equiv \lim_{\xi \rightarrow +\infty} A(\lambda,\xi)$.  The unstable bundle construction, however, is valid for general systems $A_{\pm \infty}$ that \textbf{split} in $\Omega$, ie: each have exactly $k$ unstable, and $n-k$ stable directions for every $\lambda \in \Omega$. The final modification we will make is to account for systems with non-symmetric asymptotic limits.  The following construction will reduce to that in the previous sections if the system is symmetric or the dimension of the unstable manifold is $k=1$, so this may be considered the fully general statement of the method of geometric phase for systems on unbounded domains.

Suppose we have the determinant bundle system
\begin{equation}\label{eq:A^ksystemnonsymmetric} \begin{matrix}
Y'=A^{(k)}(\lambda, \tau) Y & A^{(k)}_{\pm \infty}(\lambda) = \lim_{\xi \rightarrow \pm\infty} A^{(k)}(\lambda,\tau) \\ \\
\tau' = \kappa(1-\tau^2) & \\ \\
\end{matrix} \end{equation}
derived from the flow $Y'=AY$ on $\mathbb{C}^n$.

Given a non-degenerate reference path for $A_{-\infty}(\lambda)$, $X^-(\lambda)$, we may construct the center-unstable manifold of the direction of critical points at $\tau = -1$ in the $B^{(k)}$ system as before. However, the behavior of such a solution will differ when $\tau \rightarrow +1$.  The dominating unstable eigenvalue for the system at $\tau=+1$ does not in general equal the value at $\tau = -1$, but we must guarantee the hyper-spherical projection is non-singular as $\xi \rightarrow \infty$.

\begin{mydef} \label{mydef:gamma}
Let $\mu^\pm(\lambda)$ be the eigenvalue of most positive real part for $A^{(k)}_{\pm\infty}(\lambda)$.  For a reference path $X^-(\lambda)$ for $A^{(k)}_{-\infty}(\lambda)$ define the center-unstable manifold of $X^-(\lambda)$ in the $B^{(k)}$ system to be $Z(\lambda,\tau)$ for $\tau\in[-1,1)$.  Define
\begin{equation*} \begin{matrix}
Z^\sharp(\lambda,\tau) :=  e^{\left(\mu^-(\lambda)\xi\right)}Z(\lambda,\tau) & & \tau \in (-1,+1)
\end{matrix} \end{equation*}
so that
\begin{equation} \begin{matrix}
\Gamma(\lambda,\tau) &:=& \begin{cases}
e^{\left(-\mu^- \xi\right)}Z^\sharp(\lambda,\tau) & \text{for $\tau \in [-1,0)$} \\
e^{\left(-\mu^+ \xi\right)}Z^\sharp(\lambda,\tau) & \text{for $\tau \in [0,+1)$} \\
\lim_{\xi\rightarrow \infty} e^{\left(-\mu^+ \xi\right)}Z^\sharp(\lambda,\tau) & \text{for $\tau =+1$}\\
\end{cases} \end{matrix} \end{equation}
\end{mydef}
\begin{prop}\label{prop:nonsymmetric}
$\Gamma(\lambda,\tau)$ satisfies the equation
\begin{equation}\begin{matrix}\label{eq:specialsystem}
Y' = \Psi(\lambda,\xi)Y  & &
\Psi &=& \begin{cases}
\big(A^{(k)}(\lambda,\xi) - \mu^-(\lambda)I\big)\text{ for $\xi\in(-\infty,0)$} \\
\big(A^{(k)}(\lambda,\xi) - \mu^+(\lambda)I\big)\text{ for $\xi\in[0,+\infty)$}
\end{cases}
\end{matrix}\end{equation}
Moreover, $\Gamma(\lambda,\tau)$ is non-zero and analytic in $\lambda$ for fixed $\tau$, and spans the determinant bundle $\forall (\lambda,\tau)\in H_-$.
\end{prop}
\begin{proof}
Notice that $\Gamma(\lambda,\tau)$ is a solution to equation (\ref{eq:specialsystem}) is by construction, and moreover, the analyticity of $\Gamma$ for $\tau \in [-1,+1)$ is obvious from the analyticity of $Z$.  Under the flow defined by 
\begin{align}
Y'=\big(A^{(k)}(\lambda,\tau) - \mu^+(\lambda)I\big)Y
\end{align} 
the eigenvector corresponding to $\mu^+(\lambda)$ is once again a line of critical points; we want the solution $\Gamma(\lambda,\tau)$ to converge uniformly in $\lambda$ to a non-zero critical point defined this way.  This uniform convergence is obtained by using Lemma 6.1 in Alexander, Gardner \& Jones \cite{AGJ1990} as we did in our Lemma \ref{lemma:nonsingproj}, but in full generality.  Likewise, following the proof of our Proposition \ref{prop:symmetric}, $\Gamma(\lambda,\tau)$ indeed defines a section of the determinant bundle over the lower hemisphere $H_-$.\end{proof}
\begin{theorem}[\textbf{The method of geometric phase---general unbounded systems}]\label{theorem:caseunbounded}
If $X^\pm(\lambda)$ are reference paths for $A^{(k)}_{\pm\infty}(\lambda)$, and $\Gamma\big(\lambda,\tau(\xi)\big)$ is defined as in Definition \ref{mydef:gamma}, then the asymptotic relative phase of $\Gamma\big(\lambda,\tau(\xi)\big)$,
\begin{align}
\lim_{\xi\rightarrow\infty}GP\Big(\Gamma\big(K,\tau(\xi)\big)\Big) - GP\Big(X^+(K)\Big),
\end{align}
 equals the total multiplicity of the eigenvalues enclosed by the contour $K$ if $X^\pm(\lambda)$ are non-degenerate. 
\end{theorem}
\begin{proof}
To adapt the determinant bundle method from here, we need only define $Y, \hat{Y}$ appropriately so they converge to a non-degenerate reference path for $A^{(k)}_{+\infty}$.  The construction of the induced parallel translation will follow analogously, as will the lemmas of \S \ref{section:twodim}.\end{proof}
\begin{remark}
The equivalence of the geometric phase for $\Gamma(\lambda,\tau)$ and $Z^\sharp(\lambda,\tau)$ for $\tau \in (-1,1)$ follows from the proof of Proposition \ref{prop:ABequiv}. 
\end{remark}
\section{Boundary value problems}
\label{section:boundary}
Gardner \& Jones further developed the bundle construction for the Evans Function to study boundary value problems with parabolic boundary conditions \cite{GJ91}, ie: problems of the form

\begin{equation*} \begin{matrix}

 u_t = Du_{xx} + f(x,u,u_x) & & (0<x<1) & \\ \\
u(x,0)=u_0 & B_0 u = 0 & B_1 u = 0\\

\end{matrix} \end{equation*}

where $u \in \mathbb{R}^n$, $f:\mathbb{R}^{2n+1} \rightarrow \mathbb{R}^n$ is $C^2$.  The matrix $D$ is a positive diagonal matrix and the boundary operators are defined
\begin{align*}
B_0 u & =  D^0 u(0,t) + N^0 u_x(0,t) \\
B_1 u & =  D^1 u(0,t) + N^1 u_x(0,t) 
\end{align*}
such that $D^j,N^j$ are diagonal with entries $\alpha^j_i,\beta^j_i$ respectively that satisfy

\begin{equation*} \begin{matrix}
{\left(\alpha^j_i\right)}^2 + {\left(\beta^j_i\right)}^2 = 1 &  &1 \leq i \leq n; & i =1,2\\
\end{matrix} \end{equation*}

Austin \& Bridges built upon and generalized these bundle methods into a vector bundle construction for boundary value problems for which the boundary conditions can depend on $\lambda$, and allow for general splitting of the boundary conditions \cite{BRI03}.  In this section we will consider how the method of geometric phase can be adapted to boundary value problems, using the techniques Austin and Bridges developed for the general boundary conditions.

\subsection{Constructing the boundary bundle for $\mathbb{C}^n$}

Suppose for $n\geq 2$ we are given a system of ODE's defining a flow on $\mathbb{C}^n$, derived from the linearization $\mathcal{L}$ of a reaction diffusion equation about a steady state.  Assume the system is of the form
\begin{equation}\label{eq:boundaryprob} \begin{matrix}
u_x = A(\lambda,x)u  & 0<x<1 & \lambda \in \Omega \subset \mathbb{C} \\ \\
a_i^*\big(\bar{\lambda}\big): \mathbb{C} \rightarrow \mathbb{C}^n & i = 1, ... ,n-k &
b_i^*\big(\bar{\lambda}\big): \mathbb{C} \rightarrow \mathbb{C}^n & i = 1, ... , k \\
\end{matrix} \end{equation}
where $A(\lambda,x)$ depends analytically on $\lambda$, and the $a_i^*,b_i^*$ are holomorphic functions of $\bar{\lambda}$ that describe the boundary conditions for the operator $\mathcal{L}$---the specific conditions are described with respect to the \textbf{section product} below.

The ambient trivial bundle is once again constructed from the product $M \times \mathbb{C}^n$.  The vectors $\left(\lambda,x,a_i^*\right), \left(\lambda,x,b_i^*\right)$ for each $(\lambda, x)\in M$ are anti-holomorphic sections of the trivial bundle, motivating the above dual notation.  
\begin{mydef}
For a pair $\nu(\lambda,x)$, $\eta(\lambda,x)$ where $\nu$ is a holomorphic section and $\eta$ is an anti-holomorphic section of the trivial bundle $M\times \mathbb{C}^n$, their product is defined as:

\begin{equation}\label{eq:sectionproduct} 
\langle \eta , \nu \rangle_\lambda  = \sum_{j=1}^n \overline{\eta_j\big(\overline{\lambda}\big)}  \nu_j(\lambda)
\end{equation}
where $\eta_j, \nu_j$ are their respective components.
\end{mydef}
\begin{remark}
This scalar product is holomorphic for all $\lambda \in \Omega$, and the boundary value problem is formulated as follows: $u(\lambda,x)$ is an eigen function of the operator $\mathcal{L}$ for the eigenvalue $\lambda$ if and only if $u(\lambda,x)$ is a solution to $u_x = A(\lambda,x)u$ and 
\begin{equation*} \begin{matrix}

\left\langle a_i^*\big(\bar{\lambda}\big), u(\lambda,0) \right\rangle_\lambda = 0 & & i = 1, ..., n-k \\ \\
\left\langle b_i^*\big(\bar{\lambda}\big), u(\lambda,1) \right\rangle_\lambda = 0 & & i = 1, ..., k \\
\end{matrix} \end{equation*}
\end{remark}
A significant difference in this construction from the unbounded systems is that there are no dynamics to consider on the caps of the parameter sphere, and we will not be concerned with the eigenvalues of a limiting system.  What is needed then is an analogue to the unstable bundle that will trace the dynamics and pick up winding while traversing the parameter sphere between $\tau = \mp 1$. One choice is the orthogonal compliment to the initial conditions, dimension $k$, and the manifold defined by their evolution.

We need to show these subspaces, and their images under the flow, vary holomoprhically with respect to $\lambda \in \Omega$.  With a holomorphic basis, we may construct a non-trivial vector bundle over M through which we can calculate the geometric phase with the determinant bundle.  
\begin{theorem}
For a system of the form (\ref{eq:boundaryprob}) derived from the operator $\mathcal{L}$ there exists analytic choices of orthogonal bases for $\mathbb{C}^n$ such that
\begin{align}
&V_0 :=\{\nu_i(\lambda):\lambda\in\Omega \}_1^{n-k}  &U_0 := \{\xi_i(\lambda) : \lambda \in \Omega \}_1^k & & V_0 \oplus U_0 = \mathbb{C}^n\label{eq:rpath1} \\ 
&V_1 := \{\upsilon_i(\lambda):\lambda\in\Omega \}_1^{n-k}  & U_1:=\{\eta_i(\lambda) : \lambda \in \Omega \}_1^k & & V_1 \oplus U_1 = \mathbb{C}^n \label{eq:rpath2} 
\\
&span_{\mathbb{C}}\{\nu_i\}_1^{n-k} = span_{\mathbb{C}}\{a_i^*\}_1^{n-k} &  span_{\mathbb{C}}\{\eta_i\}_1^{k} = span_{\mathbb{C}}\{b_i^*\}_1^{k}
\end{align}
and with respect to the product of sections (\ref{eq:sectionproduct})
\begin{align}
\left\langle a_i^*\big(\bar{\lambda}\big) , \xi_j(\lambda) \right\rangle_\lambda =& 0 & 0 \leq i \leq n-k,\hspace{2mm} 0\leq j\leq k 
\\ 
\left\langle b_i^*\big(\bar{\lambda}\big) , \upsilon_j(\lambda) \right\rangle_\lambda =& 0 & 0 \leq i \leq k,\hspace{2mm} 0\leq j\leq n-k  
\end{align}
\end{theorem}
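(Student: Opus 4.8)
The statement splits into two symmetric halves, one built from the data $\{a_i^*(\bar\lambda)\}_1^{n-k}$ at $x=0$ and one from $\{b_i^*(\bar\lambda)\}_1^{k}$ at $x=1$; I describe the plan for the first, the second being identical with the two blocks interchanged. Here ``analytic'' is meant in the real-analytic sense: the frames built directly from the boundary data are anti-holomorphic in $\lambda$, hence real-analytic, while the complementary frames will be genuinely holomorphic.

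First I would simply set $\nu_i:=a_i^*(\bar\lambda)$ for $i=1,\dots,n-k$. The admissibility of the boundary operators --- the hypothesis that makes $(\mathcal L-\lambda)$ a well-posed problem with only discrete spectrum in $\Omega$ --- forces $\{a_i^*(\bar\lambda)\}_1^{n-k}$ to be linearly independent for every $\lambda\in\Omega$, so these vectors already constitute an anti-holomorphic (hence analytic) frame for the rank-$(n-k)$ subbundle $V_0:=\bigcup_\lambda\operatorname{span}_{\mathbb C}\{a_i^*(\bar\lambda)\}$, and $\operatorname{span}_{\mathbb C}\{\nu_i\}=\operatorname{span}_{\mathbb C}\{a_i^*\}$ by construction. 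If one wants the $\nu_i$ orthonormal (so that, together with the $\xi_j$ below, they form an orthogonal basis of $\mathbb C^n$ relative to $\langle\cdot,\cdot\rangle_\lambda$), one applies Gram--Schmidt with the form $\langle\cdot,\cdot\rangle_\lambda$ restricted to $V_0$; its Gram determinant is real-analytic in $\lambda$, reduces on $\Omega\cap\mathbb R$ to a positive Hermitian Gram determinant, and is nonvanishing on all of $\Omega$ under the admissibility hypothesis, so the output is again analytic.

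The real content is the complementary frame $\{\xi_j\}_1^k$. Here I would use exactly the feature for which the pairing of an anti-holomorphic section with a holomorphic section was introduced: by the Remark following (\ref{eq:sectionproduct}), each $w\mapsto\langle a_i^*(\bar\lambda),w\rangle_\lambda$ depends holomorphically on $\lambda$, so the $n-k$ functionals $\langle a_i^*(\bar\lambda),\cdot\rangle_\lambda$ form a holomorphic family on $\mathbb C^n$; by admissibility they are independent for every $\lambda\in\Omega$, hence their common kernel $U_0(\lambda)$ has constant complex dimension $k$ and is a holomorphic subbundle of $\Omega\times\mathbb C^n$. To extract a single global holomorphic frame I would first build local ones: on the open set where a fixed $(n-k)\times(n-k)$ minor of the coefficient matrix of the functionals is nonzero, Cramer's rule solves for $n-k$ of the coordinates holomorphically in terms of the remaining $k$, giving $k$ holomorphic frame vectors; two such local frames differ by a holomorphic $GL(k,\mathbb C)$-valued transition map, and since $\Omega$ is open and simply connected in $\mathbb C$, hence Stein, holomorphic vector bundles over it are holomorphically trivial, so a global frame $\{\xi_j\}_1^k$ exists. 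Then $\langle a_i^*(\bar\lambda),\xi_j(\lambda)\rangle_\lambda=0$ by definition of $U_0$, and $V_0(\lambda)\cap U_0(\lambda)=0$ because the Gram determinant of $\{a_i^*(\bar\lambda)\}$ is nonzero, so $V_0\oplus U_0=\mathbb C^n$. Running the same construction at $x=1$ with $\{b_i^*(\bar\lambda)\}_1^k$ --- now $\eta_i:=b_i^*(\bar\lambda)$ frames $U_1:=\bigcup_\lambda\operatorname{span}_{\mathbb C}\{b_i^*(\bar\lambda)\}$, and the holomorphic frame $\{\upsilon_j\}_1^{n-k}$ is extracted from the complement $V_1:=\bigcap_i\ker\langle b_i^*(\bar\lambda),\cdot\rangle_\lambda$ --- gives $\operatorname{span}_{\mathbb C}\{\eta_i\}=\operatorname{span}_{\mathbb C}\{b_i^*\}$, $\langle b_i^*(\bar\lambda),\upsilon_j(\lambda)\rangle_\lambda=0$, and $V_1\oplus U_1=\mathbb C^n$.

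The one genuinely delicate point --- and where I expect the main obstacle to sit --- is guaranteeing that the ranks stay constant and the direct-sum splittings hold at \emph{every} $\lambda\in\Omega$ rather than merely generically: the Gram determinant of the boundary vectors, and the rank of the associated functionals, are only real-analytic objects, positive or maximal on $\Omega\cap\mathbb R$ but a priori capable of degenerating elsewhere in $\Omega$. Excluding this is precisely what the admissibility (well-posedness) hypotheses on $a_i^*,b_i^*$ provide, and it is the step at which I would lean on the corresponding structural results of Austin and Bridges \cite{BRI03}, the remainder of the argument being a routine transcription of their construction into the present notation; the only other nontrivial ingredient is the passage from local holomorphic frames to a global one over the Stein domain $\Omega$.
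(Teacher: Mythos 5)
Your proposal is correct in outline, but it is worth saying plainly that the paper's own ``proof'' of this theorem is nothing more than a citation: the authors simply state that the result is the content of Lemmas 3.1 through 3.3 of Austin and Bridges \cite{BRI03} and refer the reader there. So you have done strictly more work than the paper does. Your reconstruction --- taking $\nu_i:=a_i^*(\bar\lambda)$ and $\eta_i:=b_i^*(\bar\lambda)$ directly, observing that $w\mapsto\langle a_i^*(\bar\lambda),w\rangle_\lambda$ is holomorphic in $\lambda$ so that the common kernel $U_0(\lambda)$ is a holomorphic rank-$k$ subbundle, extracting local holomorphic frames by Cramer's rule, and globalizing over the simply connected (hence Stein) domain $\Omega$ --- is a faithful account of how such a splitting is actually built, and it makes visible exactly where each hypothesis enters: the mixed holomorphic/anti-holomorphic pairing (\ref{eq:sectionproduct}) is what makes the annihilator subbundle holomorphic, and the Stein property of $\Omega$ is what upgrades local frames to a global one. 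You are also right to isolate the one genuinely delicate point, namely that the ``Gram determinant'' $\det\bigl(\langle a_i^*(\bar\lambda),a_j^*(\bar\lambda)\rangle_\lambda\bigr)$ is only guaranteed positive on $\Omega\cap\mathbb{R}$ and could a priori vanish at non-real $\lambda$, which would destroy the splitting $V_0\oplus U_0=\mathbb{C}^n$; excluding this is precisely the content of the admissibility hypotheses in \cite{BRI03}, and deferring that step to the reference is exactly what the paper itself does for the entire theorem. In short: no gap relative to the paper's standard of proof, and your version has the advantage of showing the reader what is actually being claimed and why it should be true, at the cost of still resting on the same external input at the critical non-degeneracy step.
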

\begin{proof}
This is the content of Austin \& Bridges' results in Lemmas 3.1 through 3.3 in \cite{BRI03} and the reader is referred there for a full discussion.\end{proof}
\begin{remark}
Reformulating the problem in this context, $u(\lambda,x)$ is an eigenfunction of $\mathcal{L}$ with eigenvalue $\lambda$ if and only if  
\begin{align*}
u(\lambda,0)\in span_\mathbb{C}\{U_0(\lambda)\}\\
u(\lambda,1) \in span_\mathbb{C} \{V_1(\lambda)\}.
\end{align*}
\end{remark}
Thus we will define a ``boundary bundle'' over $M$ by foliating the subspaces $U_0(\lambda),U_1(\lambda)\subset \mathbb{C}^n$ on the caps of M, and constructing subspaces that connect $U_0(\lambda),U_1(\lambda)$ while picking pick up the information of the flow.  We choose $U_0(\lambda),U_1(\lambda)$ as fibers above the caps of the boundary bundle because if $\lambda$ is not an eigenvalue, a solution to the system $u'=Au$ cannot be in the span $U_0$ at $x=0$ and in the span of $V_1$ at $x=1$.  Any collection of solutions $\{\gamma_1(\lambda,x),...,\gamma_k(\lambda,x)\}$ that satisfy the boundary conditions at $x=0$, and are linearly independent for $(\lambda,0)$, will be linearly independent for $(\lambda,x)$ where $x\in[0,1)$. In particular when $\lambda$ is not an eigenvalue of $\mathcal{L}$, then $\{\gamma_1(\lambda,1),...,\gamma_k(\lambda,1)\}$ are linearly independent and must span $some$ compliment of $V_1(\lambda)$; in general this need not be the orthogonal compliment, ie: $U_1(\lambda)$, but it is possible to smoothly deform the solutions into $U_1(\lambda)$ with the projection operator. 
\begin{mydef}
Define the $\lambda$ dependent projection operator
\begin{equation*}
Q_\lambda: \mathbb{C}^n  \rightarrow  U_1(\lambda) 
\end{equation*}
and define the orthogonal projection operator
\begin{equation*}
P_\lambda = (I - Q_\lambda): \mathbb{C}^n  \rightarrow  V_1(\lambda)
\end{equation*}
\end{mydef}
\begin{prop}\label{prop:boundarybundle}
Let $u_i(\lambda,x)$ be solutions to the flow on $\mathbb{C}^n$ such that \mbox{$u_i(\lambda,0)=\xi_i(\lambda)$} for each $i=1,...,k$, and let $\{\eta_i(\lambda):i=1,...,k \text{ and } \lambda\in\Omega\}$ be a holomorphic basis for $U_1(\lambda)$. Define
\begin{equation*} \begin{matrix}
\sigma_i(\lambda,x) & \equiv &
\begin{cases}
(\text{I} -xP_\lambda)\big(u_i(\lambda,x)\big)& (\lambda,x)\in K \times [0,1]\\ \\
 \xi_i(\lambda) & (\lambda,0) \in K^\circ\times\{0\} \end{cases}
\end{matrix} \end{equation*}  
Then $\{\sigma_j(\lambda,x)\}$ are linearly independent and holomorphic for all $(\lambda,x) \in M \setminus \left(K^\circ \times \{1\}\right)$.
\end{prop}
\begin{proof}
This proposition follows immediately from the results of \S 4 in Austin \& Bridges \cite{BRI03}, and we may now use the above construction to describe the boundary bundle over $M$.\end{proof}
\begin{mydef}
Define $\mathcal{E}_{\lambda,x}\subset \mathbb{C}^n$ to be the $k$ dimensional subspace spanned by $\{\sigma_i(\lambda,x) : i = 1, \cdots, k\}$ for $(\lambda,x)\in K \times [0,1]$.   Over $K^\circ \times \{0\}$ define $\mathcal{E}_{\lambda,0} =span_\mathbb{C}\{U_0(\lambda)
\}$.  Finally over $K^\circ \times \{1\} $ we define $\mathcal{E}_{\lambda,1} =span_\mathbb{C}\{U_1(\lambda)\}$. For the fibers defined as above, $E \equiv \{(\lambda,x, \mathcal{E}_{\lambda,x}) : (\lambda,x)\in M\}$ is defined to be the \textbf{boundary bundle} with respect to equation (\ref{eq:boundaryprob}) over $M$. 
\end{mydef}
It follows from Proposition \ref{prop:boundarybundle} that $E$ is a holomorphic vector bundle for which we can use $\{\sigma_i(\lambda,x)\}^k_1$ and $\{\eta_i(\lambda)\}^k_1$ to construct local trivializations over the upper and lower hemispheres of $M$, as defined in \S \ref{section:twodim}.  In particular define $\{\eta_i(\lambda,x)\}^k_1$, for $(\lambda,x)\in K \times (0,1]$, as the solutions to equation (\ref{eq:boundaryprob}) which converge to $\{\eta_i(\lambda)\}$ at $x=1$, and suppose we extend the $\{\sigma_i(\lambda,1)\}^k_1$ holomorphically into an open set in $K^\circ$ homotopy equivalent to $S^1$.  
\begin{mydef}
Define the trivializations of the boundary bundle $E$ over open sets in $M$, by
\begin{equation*} \begin{matrix}
\phi_- : & H_- \times \mathbb{C}^k &\hookrightarrow & H_- \times \mathbb{C}^n \\ \\
 & (\lambda,x , ze_i) & \mapsto & \big(\lambda,x, z \sigma_i(\lambda,x)\big) \\ \\
\phi_+ : & H_+ \times \mathbb{C}^k &\hookrightarrow & H_+ \times \mathbb{C}^n \\ \\
 & (\lambda,x , ze_i) & \mapsto & \big(\lambda,x, z \eta_i(\lambda,x)\big) \\
 \end{matrix} \end{equation*}
whereby the transition map at $\{1\}\times K$ is defined by the matrix $\hat{\phi}(\lambda,1,-) := \phi_+^{-1} \circ \phi_-(\lambda,1,-)$.  

\end{mydef}
\begin{lemma}
The winding of the determinant of the transition function,
\begin{equation*}
\det\circ\hat{\phi}_{(1,-)}(\lambda) : K  \rightarrow GL(\mathbb{C}),
\end{equation*}
equals the total multiplicity of eigenvalues of $\mathcal{L}$ contained within $K^\circ$.
\end{lemma}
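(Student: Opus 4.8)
The plan is to follow exactly the template established for the unstable bundle in Section~\ref{section:twodim}: reduce the statement to the classification of complex line bundles over the 2-sphere, identify the equatorial transition function with the holomorphic matrix $(\rho_{ij})$ from Proposition~\ref{prop:boundarybundle}, and then invoke the Austin--Bridges identification of the determinant of that matrix with the Evans function of the boundary value problem.

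First I would recall that $M$ is homeomorphic to $S^2$ (Hypothesis~\ref{hyp:K} and the parameter-sphere construction), so $\Lambda^k(E)$, being a complex line bundle over $M$, is classified up to isomorphism by its first Chern class, an integer equal to the degree of the transition function $\det\circ\hat\phi_{(1,z)} : K\cong S^1 \to \mathbb{C}\setminus\{0\}\simeq GL(1,\mathbb{C})$ restricted to the equator; by homotopy invariance this degree is independent of the fixed $z\in\mathbb{C}^\ast$ and of the value of $x$ at which the equator is taken, exactly as in the remark preceding Lemma~\ref{lemma:inducedphase}, and $c_1(\Lambda^k E)=c_1(E)$. Second, I would compute the transition function explicitly: since $\phi_-(\lambda,1,ze_i)=(\lambda,1,z\mu_i(\lambda,1))$, $\phi_+(\lambda,1,ze_j)=(\lambda,1,z\eta_j(\lambda,1))$ with $\eta_j(\lambda,1)=\eta_j(\lambda)$, and $\mu_i(\lambda,1)=\sum_j\rho_{ij}(\lambda)\eta_j(\lambda)$ by Proposition~\ref{prop:boundarybundle}, the transition matrix on $K\times\{1\}$ is precisely $(\rho_{ij}(\lambda))$, so that $\det\circ\hat\phi_{(1,z)}(\lambda)=\det(\rho_{ij}(\lambda))=:D(\lambda)$. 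By the same proposition the $\mu_i$ and the $\eta_j$ are holomorphic in an annular neighbourhood of $K$, hence $D$ is holomorphic there, and it is nonzero on $K$ because $K$ meets no spectrum of $\mathcal{L}$: a zero of $D$ at $\lambda$ forces $\mathcal{E}_{\lambda,1}\cap \mathrm{span}_\mathbb{C}\{V_1(\lambda)\}\neq 0$, i.e.\ a solution of $u_x=A(\lambda,x)u$ with $u(\lambda,0)\in\mathrm{span}_\mathbb{C}\{U_0(\lambda)\}$ and $u(\lambda,1)\in\mathrm{span}_\mathbb{C}\{V_1(\lambda)\}$, which by the eigenfunction reformulation following the basis theorem is an eigenfunction of $\mathcal{L}$ for $\lambda$. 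This is where the choice of cap fibers $U_0$ and $U_1$ is used.

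Third, I would apply the argument principle to $D$ on $K^\circ$: the winding of $\det\circ\hat\phi_{(1,z)}=D$ around $K$ equals the number of zeros of $D$ in $K^\circ$, counted with multiplicity. What then remains is to match \emph{orders}: the order of vanishing of $D$ at an eigenvalue $\lambda_0$ must equal the algebraic multiplicity of $\lambda_0$ for $\mathcal{L}$. I expect this to be the main obstacle, and it is not elementary, since $D$ need not literally be the Evans function of the problem; however, Austin and Bridges \cite{BRI03}, building on Gardner--Jones \cite{GJ91}, show that $D$ differs from the Evans function only by a nowhere-vanishing analytic factor on $\Omega$, and the coincidence of the Evans function's order of vanishing with the algebraic multiplicity is classical (see \cite{AGJ1990}, \cite{kapitula2013}). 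Granting this, the zero count of $D$ equals the multiplicity of eigenvalues of $\mathcal{L}$ in $K^\circ$, which is the claim. Equivalently — and more in the spirit of the bundle framework used throughout this paper — one may bypass the argument-principle bookkeeping altogether and simply invoke that \cite{BRI03} proves $c_1(E)$ equals this multiplicity; steps one and two above then identify $c_1(E)$ with the winding of $\det\circ\hat\phi_{(1,z)}$, completing the proof.
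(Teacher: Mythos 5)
Your argument is correct and ultimately rests on the same foundation as the paper, which proves this lemma simply by citing Theorem 5.1 of Austin and Bridges \cite{BRI03}; your identification of the equatorial transition matrix with $(\rho_{ij}(\lambda))$ and your deferral of the key step (that the order of vanishing of $\det(\rho_{ij})$ at an eigenvalue equals its algebraic multiplicity) to \cite{BRI03} is an expanded reconstruction of exactly that cited result. In short, you have filled in the bookkeeping the paper omits, but the substance is delegated to the same reference in both cases.
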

\begin{proof}
This is proven by Austin \& Bridges \cite{BRI03}.\end{proof}
\subsection{Geometric phase for paths in $E$}
With the bundle view of boundary value problems with $\lambda-$dependent boundary
conditions, we are in the position to utilize the method geometric phase to relate the total multiplicity of eigenvalues contained within $K^\circ$ to the relative phase of paths in the
bundle.  Utilizing the determinant bundle, as in \S\ref{section:extension}, we will recover the Chern number of the boundary bundle $E$ through the relative phase.
The wedge product of the solutions $\{\sigma_i(\lambda,x): i=1,...,k\}$
will form a solution to the associated system on $\Lambda^k\left(\mathbb{C}^n\right)$ $A^{(k)}$ for which we can compute the phase.

\begin{mydef}\label{mydef:boundarybundle} Let $\mathcal{U}(\lambda,x) : = \sigma_1(\lambda,x) \wedge ... \wedge \sigma_k(\lambda,x)$, and denote $\hat{\mathcal{U}}(\lambda,x)$ to be the spherical projection of $\mathcal{U}(\lambda,x)$.  Similarly let $\eta(\lambda,x):= \eta_1(\lambda,x) \wedge ... \wedge \eta_k(\lambda,x)$, and $\hat{\eta}(\lambda,x)$ be the normalization of $\eta$ in the exterior algebra.  Then the line bundle over the parameter sphere with fibers defined by the span of $\mathcal{U}(\lambda,x)$ for $ 0\leq x\leq 1$ and the span of $\eta(\lambda,x)$ for $x=1$ is defined to be the \textbf{determinant bundle of the boundary bundle}.
\end{mydef}
Note that $\hat{\mathcal{U}}$ may not be holomorphic, but as in previous sections it inherits infinite differentiability in the parameter $s$, where $\lambda(s) :[0,1] \hookrightarrow K$.  We may thus calculate the geometric phase of the vector $\hat{\mathcal{U}}\big(\lambda(s),x\big)$ on the Hopf bundle $S^{\left(2{n \choose k} -1\right)}$. By the above definition, we may define trivializations of the determinant bundle similarly to the previous sections through $\hat{\mathcal{U}}(\lambda,x)$ over $H_-$ and $\hat{\eta}(\lambda,x)$ over $H_+$.  The Chern number of this vector bundle is equal to the winding of the transition function, given exactly by the winding of $\det\circ\phi_{(1,-)}(\lambda)$.

\begin{mydef}
The relative phase of $\mathcal{U}(\lambda,x)$, as in Definition \ref{mydef:boundarybundle}, is defined to be the quantity
\begin{align}
GP\Big(\mathcal{U}(K,x)\Big) - GP\Big(\eta(K,1)\Big)
\end{align}
\end{mydef}

\begin{theorem}[\textbf{The method of geometric phase---finite domains}]\label{theorem:boundary}
Let $\mathcal{U}(\lambda,x), \eta(\lambda,x)$ be defined as in Definition \ref{mydef:boundarybundle}; the relative phase of $\mathcal{U}(\lambda,1)$,
\begin{align}
GP\Big(\mathcal{U}(K,1)\Big) - GP\Big(\eta(K,1)\Big),
\end{align}
 is equal to the total multiplicity of the eigenvalues enclosed by the contour $K$ if $\mathcal{U}(\lambda,0)$ and $\eta(\lambda,1)$ are holomorphic and non-zero over $K^\circ$.
\end{theorem}
\begin{proof}
The calculations of the winding of the transition function and the geometric phase are analogous to the calculations performed in \S \ref{section:extension}; there is no difference in calculating the geometric phase and transition function with respect to these trivializations, and the proofs of the lemmas of \S \ref{section:twodim} and \S \ref{section:extension} will also work for the boundary bundle setting. Therefore, the relative phase of $\mathcal{U}(\lambda,x)$ at $x=1$ agrees with the total multiplicity of the eigenvalues contained in $K^\circ$ if the paths $\mathcal{U}(\lambda,0)$ and $\eta(\lambda,1)$ enclose no zeros or poles.\end{proof}
\begin{remark}
Theorem \ref{theorem:boundary} formalizes and validates the method of geometric phase for boundary value problems of the form described in Austin \& Bridges \cite{BRI03}.
\end{remark}

\section{A numerical example}
\label{section:numerics}
In this section we present an example exploring Way's numerical method for computing the geometric phase on the Hopf bundle.  This example illustrates some of the
properties of the phase and its variation along paths, and it is used for exploring
future directions for research regarding the geometric phase and the underlying structure of the travelling wave.  We demonstrate a clear dependence on the length of the integration in the $\xi$ direction, where the \textbf{relative phase} changes continuously from zero
to the value of the multiplicity of the eigenvalue.  The geometric phase of a differentiable path in the unstable manifold is not generically zero, as demonstrated in the examples.  However, for symmetric systems, the relative phase will always transition from zero to the eigenvalue count, by the construction of the relative phase.

Returning to the bi-stable example, equation (\ref{eq:bistable}) with non-linearity $f(u)=u(u+1)(u-1)$, consider the case when $c=0$.  Then $\xi=x$ and $u(\xi)=\sqrt{2}sech(\xi)$ is a time independent solution to the equation $u_t=u_{\xi\xi}-u+u^3$, with $-\infty<\xi<+\infty$.  Consider the linearization $\mathcal{L}$ about the basic state.
 Trivially, $0$ is an eigenvalue of multiplicity one for the linear operator $\mathcal{L}$.  The dynamical systems formulation of $\mathcal{L}$ is
\begin{equation}\begin{matrix}
Y'=A(\lambda , \xi)Y  & & \xi \in (-\infty, \infty)\,,\ \lambda\in\Omega\subset\C\\[4mm]
A = \begin{pmatrix}
0 & 1 \\
\lambda +1 - 6sech^2(\xi) & 0 
\end{pmatrix}
& &
A_{\infty}(\lambda) = \begin{pmatrix}
0 & 1 \\
\lambda +1 & 0
\end{pmatrix}
\end{matrix} \end{equation}
which is equivalent to the operator $\mathcal{L}_\xi(p) = p_{\xi\xi} + f'(u(\xi))p$.  The eigenvalues/vectors for the asymptotic system are of the form
\begin{align}\label{eq:reacdiffunstab}
+\sqrt{\lambda+1}, \hspace{5mm} & \begin{pmatrix} 1 \\  \\  \sqrt{\lambda+1} \end{pmatrix} \\ 
-\sqrt{\lambda+1}, \hspace{5mm} & \begin{pmatrix} 1 \\  \\ -\sqrt{\lambda+1} \end{pmatrix}
\end{align}

Take the contour $K$ to be a circle of radius $0.1$ about the origin in the complex
plane.  In the figures below we plot the building of the geometric phase versus the integration interval in $\xi$.  By discretizing the contour $K$ into $10,000$ even steps, we compute the geometric phase of the forward integrated loop of eigenvectors in equation (\ref{eq:reacdiffunstab}) from $\xi=-11$ to $\xi =11$ with two different scalings.
For each point $\lambda\in K$, the initial condition is integrated forward in $\xi$ and the equation (\ref{eq:computephase}) is computed, with the derivative approximated with the difference
\begin{align*}
\partial_s\mid_{s = s_0} u(s,\xi) \approx& \frac{u(s_0+\delta s,\xi) - u(s_0-\delta s, \xi)}{2\delta s}.
\end{align*}
\noindent \begin{minipage}[t]{.5\linewidth}
\begin{figure}[H]
\includegraphics[width = \linewidth]{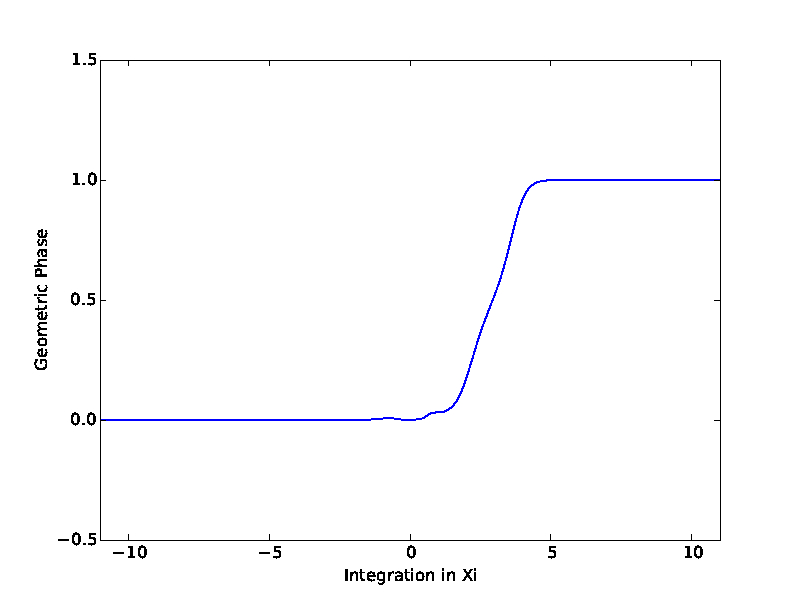}
\caption{Non-degenerate initial conditions.}
\end{figure}
\end{minipage}
\noindent \begin{minipage}[t]{.5\linewidth}
\begin{figure}[H]
\includegraphics[width = \linewidth]{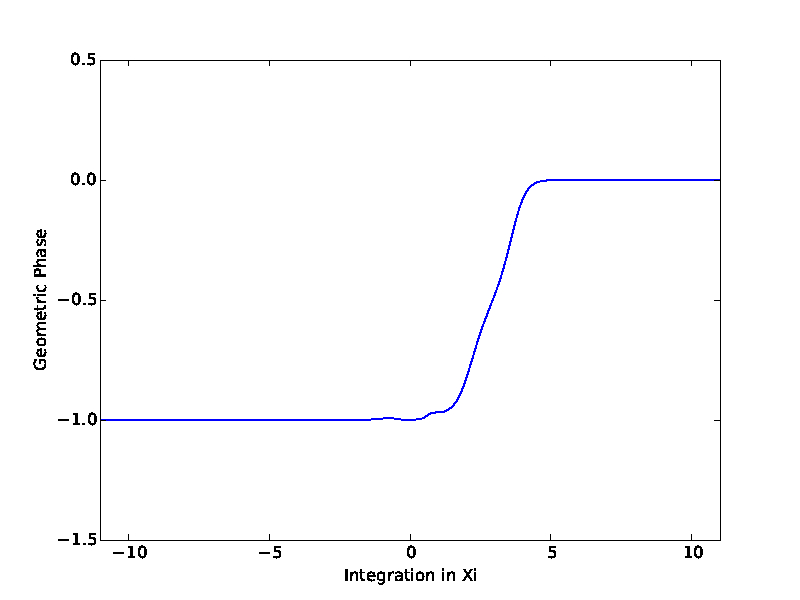}
\caption{Scaled with simple pole at zero.}
\end{figure}
\end{minipage}
 \vspace{\belowdisplayskip}

\noindent The geometric phase experiences a transition near $\xi=0$ in these two examples. Hence, the phase calculation need not be performed for $\xi$ ``close'' to $+\infty$, but simply past a threshold where the change of phase occurs.  In the left figure we take the eigenvectors in equation (\ref{eq:reacdiffunstab}) exactly as our initial conditions, but on the right we instead scale our initial condition by the factor $\frac{1}{\lambda}$, so there is a pole enclosed at $0$.  In the degenerate case on the right, the geometric phase of the initial condition is $-1$, and thus we recover the phase profile translated by the index of the degeneracy.

Although in the above \textbf{non-degenerate} example, the initial geometric phase is zero, it need not be so in general.  The contour $K$ defined as the circle with center at $0.1$ and radius $1$ nears $\lambda=-1$, where $A_{\infty}(\lambda)$ is singular; we discretize this contour into $20,000$ even steps and compute the geometric phase of non-degenerate initial conditions evolved as in the previous example.  For this contour, the geometric phase of the non-degenerate initial conditions in equation (\ref{eq:reacdiffunstab}) has a different profile, beginning with phase greater zero and terminating with phase greater than the eigenvalue count.  
   
\noindent \begin{minipage}[t]{.5\linewidth}
\begin{figure}[H]
\includegraphics[width = \linewidth]{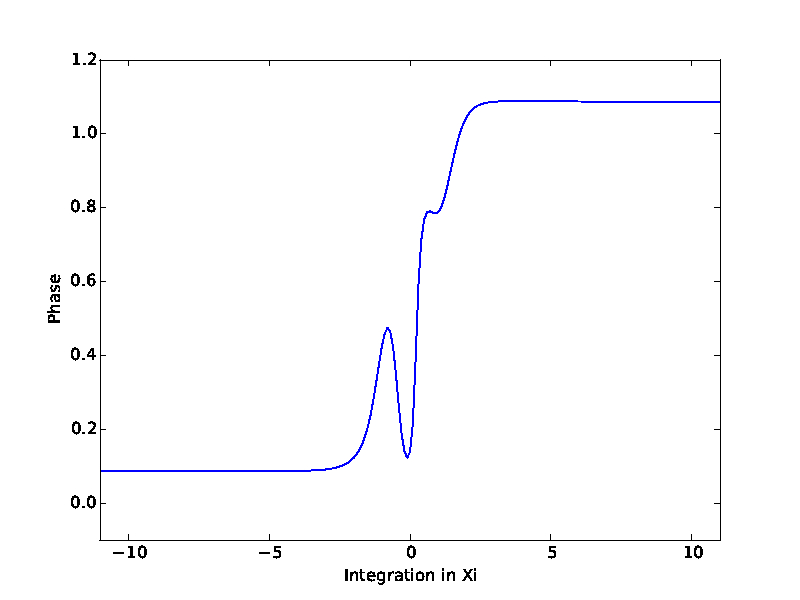}
\caption{The geometric phase profile of the evolved solution.}
\end{figure}
\end{minipage}
\noindent \begin{minipage}[t]{.5\linewidth}
\begin{figure}[H]
\includegraphics[width = \linewidth]{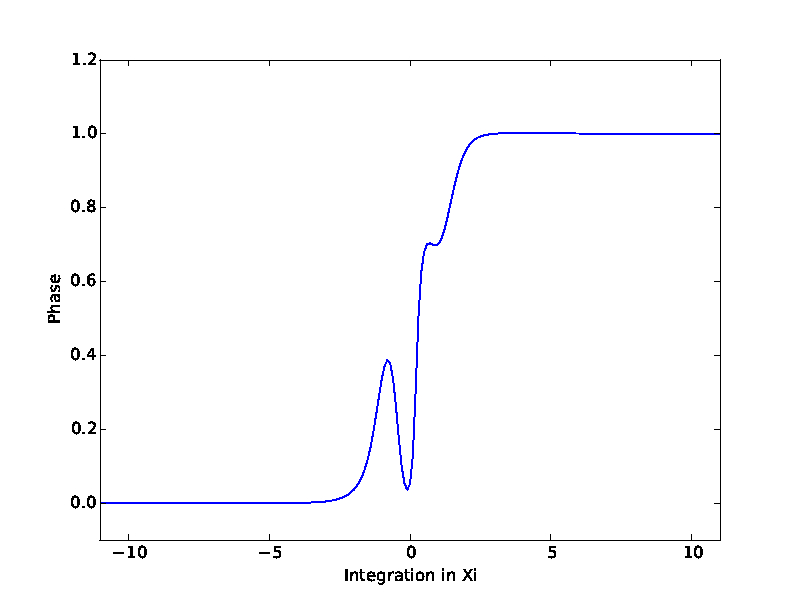}
\caption{The relative phase profile of the evolved solution.}
\end{figure}
\end{minipage}
 \vspace{\belowdisplayskip}

This specific example demonstrates the necessity of the \textbf{relative phase} formulation; in this system with symmetric asymptotic conditions, the relative phase may be formulated as
\begin{align}
GP\Big(Z(K,\tau)\Big) - GP\Big(Z(K,-1)\Big)
\end{align} 
because the reference paths may be chosen $X^-(\lambda) = X^+(\lambda)$.  We plot the relative phase as the terminal geometric phase minus the initial geometric phase; here the relative phase transitions between zero and the eigenvalue count as expected.  This second example also demonstrates the non-uniform nature of the phase transition---additional numerical experiments and discussion on the phase transition in the linearization about waves, with application to
the Hocking-Stewartson pulse solution of
the complex Ginzburg-Landau equation, are given by Grudzien \cite{grudzien-hocking-stewartson}.

\subsection{Discussion}\label{section:discussion}
The numerical examples exhibit a clear dependence on the length of the forward integration in the $\xi$ direction and it suggests firstly that it is not in general necessary to forward integrate the $\lambda$ dependent loop of eigenvectors to a value ``close to $+\infty$'', but rather, past some critical point at which there is a transition between the relative phase equals zero and the relative phase equals the total multiplicity of the eigenvalues enclosed by the spectral path.  Given that the proof of the method equates the relative phase to the Chern number of the bundle over the parameter sphere, it seems intuitive that this should be the case.  Indeed, the Chern number describes a gluing condition for the trivializations of the hemispheres, and the relative phase seems to feel the transition between these trivializations at some intermediate point, rather than at ``$+\infty$''.  Understanding the nature of this transition is of critical importance to the computational method, and the relationship of the phase transition to the underlying wave is currently unclear.

\section{Concluding remarks}
In this paper we have developed the method of geometric phase, inspired by the work of Way \cite{WAY2009}, establishing a relationship between the geometric phase and the Evans function for asymptotically autonomous and boundary value problems.  By reformulating Way's original method into the computation of the relative phase, and building on the relationship between the Chern number and the asymptotic relative phase, we validated the method of geometric phase for systems of ODEs of arbitrary dimension using the determinant bundle, as described by Alexander, Gardner \& Jones \cite{AGJ1990}.  Building on the work of Gardner \& Jones \cite{GJ91} and Austin \& Bridges \cite{BRI03}, we adapted the method of geometric phase to fit $\lambda$ dependent boundary value problems.  In addition, we have presented numerical examples that open up new questions for the study of the method of geometric phase.

\section{Acknowledgements}
Many thanks go to Rupert Way for opening this line of research with \textit{Dynamics in the Hopf bundle, the geometric phase and implications for dynamical systems} \cite{WAY2009}, and for
making available his \textsc{Matlab} codes.  This work benefited from the support of NSF SAVI award DMS-0940363, MURI award A100752 and GCIS award DMS-1312906.

\bibliography{biblio}
\end{document}